\documentclass{article}

\usepackage{graphicx}

\usepackage{amsmath}
\usepackage{amssymb}
\usepackage{amsthm}
\usepackage{stackrel}
\usepackage{epstopdf}
\usepackage{xcolor}

\usepackage{geometry}
 \geometry{
 total={150mm,225mm}, %width,height
 %left=15mm,
top=25mm,
 }

\newtheorem{lemma}{Lemma}
\newtheorem{proposition}{Proposition}
\newtheorem{defn}{Definition}
\newtheorem{remark}{Remark}

\newtheorem{assumption}{Assumption}
\DeclareMathOperator*{\argmax}{arg\,max}
\newcommand{\R}{\mathbb{R}}

\begin{document}

\title{Optimal and Sub-optimal Feedback Controls \\ for Biogas Production}

\author{Antoine Haddon$^{1,2}$ \and H\'ector Ram\'irez$^1$ \and Alain Rapaport$^2$}

\date{$^1$ Mathematical Engineering Department and Center for Mathematical Modelling (CNRS UMI 2807), Universidad de Chile, Beauchef 851, Santiago, Chile (ahaddon@dim.uchile.cl) \\
$^2$ MISTEA, Universit\'e Montpellier, INRA, Montpellier SupAgro, 2 pl. Viala, 34060 Montpellier, France
}

\maketitle

\begin{abstract}

We revisit the optimal control problem of maximizing biogas production in continuous bio-processes in two directions:
1. over an infinite horizon,
2. with sub-optimal controllers independent of the time horizon.
For the first point, we identify a set of optimal controls for the problems with an averaged reward and with a discounted reward when the discount factor goes to 0 and we show that the value functions of both problems are equal.
For the finite horizon problem, our approach relies on a framing of the value function by considering a different reward for which the optimal solution has an explicit optimal feedback that is time-independent.
In particular, we show that this technique allows us to provide explicit bounds on the sub-optimality of the proposed controllers.
The various strategies are finally illustrated on Haldane and Contois growth functions.

\end{abstract}

\noindent
Keywords : Optimal control, Chemostat model, Singular arc, Sub-optimality, Infinite horizon.

\section{Introduction}

Anaerobic digestion  is  a  biological  process  in  which  organic matter is transformed by microbial species into biogas (such as methane and carbon dioxide).
Such transformations have been  used for a long time in waste water-treatment plants to purify water \cite{R06}.
Valorizing biogas production while treating wastewater has received recently great attention, as a way for producing valuable energy and limiting the carbon footprint of the process \cite{RM13}.
As a final product of the biological reaction, the total production of biogas measures the performances of the biological transformation.
Therefore, there is a strong interest in determining control strategies maximizing  biogas production.

With continuous-stirred bioreactors, two kinds of anaerobic models are usually considered for control purposes in the literature: the one-step model, which corresponds to the classical chemostat model \cite{HLRS17}, and the two-step model that has been proposed by Bernard et al. \cite{BHDGS01}.

Although these models only have few dynamic variables, it has been shown that they are capable of reproducing the qualitative behavior of the anaerobic digestion process \cite{Bmodelcomplex}.
Furthermore, in the two-step model, the second reaction is the most limiting due to inhibition by the substrate and we can then consider that a one-step model can be used to focus on the second reaction. In particular, a common assumption is to consider that the first step is fast and then the two reactions can be reduced to a single one with a slow-fast approximation and in this case, the one-step model provides a good representation of the biogas production.

The control variable is typically the input flow rate (or equivalently the dilution rate, since the volume of the reactor is constant in continuous operating mode).
Several works have already considered the static optimization problem of maximizing the output flow rate of biogas at steady state, and various control strategies have been proposed to stabilize the processes at these nominal states (see for instance \cite{SBRM99,RRMRL06,DK09,DK10,DK11,SMV12,VM15}).

There has been comparatively much less work considering the dynamic optimization problem over the transients, while bio-processes are often not initialized at their optimal nominal state.
Although the optimal control problem, which consists in maximizing  biogas production over a given time interval, has been posed a long time ago \cite{SLTPPS97}, it is still unsolved today (even for the one-step model).
Let us mention two attempts to solve approximately or partially this problem.
Sbarciog  et  al.  \cite{SLV12} have  considered the two-step anaerobic model and proposed  a strategy for maximizing biogas production as an optimal control to drive the system in finite time in a neighborhood of the optimal steady state, with additive penalty terms in the criterion.
In \cite{GSH15}, Ghouali et al. give a complete solution of the original optimal control problem for the one-step model, but for a particular subset of initial conditions which belong to an invariant manifold of the system (see also \cite{HLRS19}).
The dynamics can be then reduced to a scalar one and the authors show that the optimal solution exhibits a singular arc with a ``most rapid approach path'' optimal strategy.
Let us underline that optimal control problems over a fixed time horizon possess generally a time-dependent optimal synthesis, while the duration of process operation is often poorly known.
However, the scalar reduced problem exhibits the remarkable feature of having an optimal synthesis independent of the terminal time, which makes it quite attractive from an application view point.

The purpose of the present article is to propose new control strategies for the one-step model, as time-independent feedbacks for general initial conditions
\begin{itemize}
\item[-] either considering an infinite horizon,
\item[-] either considering sub-optimal controllers for the finite horizon.
\end{itemize}
For the infinite horizon (see for instance the book \cite{carlson2012infinite}), we consider the limit of the discounted criterion (when the discount factor tends to zero) and the average cost.
We study optimal strategies and compare their related optimal costs.
This study extends the preliminary results presented in the conference paper \cite{HRR18} and considers a large class of growth functions, that can be in particular density-dependent (such as the Contois law) or not  (such as the Monod or Haldane law).
Our work for the finite horizon exploits and extends an approximation technique presented in \cite{HHRR17}.
This consists, for a given initial condition, in framing the optimal solution by considering a different reward for which the optimal solution can be determined exactly and that possess the property of having a time-independent optimal synthesis (i.e. whatever is the time horizon, finite or infinite).
This technique has moreover the advantage of providing bounds on the sub-optimality of the controllers.
The results are again obtained for a large class of growth functions and we show that density dependent growth functions lead to more sophisticated feedback laws.

The paper is organized as follows. Section \ref{secprelim} specifies dynamics, control, criterion and hypotheses, and gives some preliminary results about controllability and asymptotic behavior of solutions.
Sections \ref{secinfinite} and \ref{secfinite} study the optimal solutions, respectively for the infinite and finite time horizons.
Finally, Section \ref{secappli} illustrates our results on various growth functions.

\section{Preliminaries}
\label{secprelim}

In this work, we consider the classical chemostat model \cite{HLRS17}.
This represents a well-mixed continuously fed bioreactor in which a substrate of concentration $s$ is treated (and then transformed into biogas) by a population of microorganisms of concentration $x$
\begin{align} \label{eq:chemosS}
  \dot{s} &= u(s_{in} - s) - \frac{1}{Y} \mu (s,x) x, \\
  \dot{x} &= \mu (s,x) x - u x.
  \label{eq:chemosX}
\end{align}
We denote $s_{in} >0$ the inflow concentration of substrate, $Y$ the yield coefficient, $\mu(\cdot,\cdot)$ the specific growth rate and $u$ the dilution rate, which is the control.

The biogas flowrate is assumed proportional to the growth rate so that the biogas produced during a time interval $[t_0,T]$ is proportional to
\[
 \int_{t_0}^T \mu(s(t),x(t)) x(t)\,dt
\]
and, without loss of generality, we will suppose that the proportionality coefficient as well as the yield coefficient are equal to 1.

We will consider the following class of growth functions :
\begin{assumption} \label{assum:growthfct}
 We suppose that $\mu : \mathbb{R}_{+}\times \mathbb{R}_{+} \rightarrow \mathbb{R}_+ $ is a Lipschitz continuous function that satisfies, for all $x>0$
\[
\mu(0,x)=0 \mbox{ and } \mu(s,x)>0 \mbox{ for } s>0 .
\]
We suppose as well that $x\mapsto \mu(s,x)$ is non increasing, which models crowding effects, and  $x \mapsto \mu(s,x)x$ is non decreasing, which models the fact that having more biomass provides at least the same growth.
\end{assumption}
A typical instance of this class is the Contois growth function, defined later in \eqref{contois}, but note that this class of functions also contains growth functions that depend only on the substrate concentration, such as the Monod \eqref{monod} and the Haldane \eqref{haldane} functions.

We will study the problem of maximizing the accumulated biogas for controls in the following set of admissible controls
\[
 {\cal U}(t_0,T) = \Big\{ u(\cdot) \in L^{\infty}(t_0,T ; \R) : u(t) \in [0,u_{\max}] \mbox{ for } t \in [t_0,T]  \Big\}
\]
with $t_0 \in \R$ and $T \in \R \cup \{+\infty\}$, and where $u_{max} >0$ is a given parameter that represents the maximal dilution rate.
We will consider initial conditions taken in the invariant set
$${\cal D} :=[0,s_{in}) \times (0,\infty)$$
which corresponds to the most common operating conditions.
Notice that for initial conditions in ${\cal D}$, any solution of \eqref{eq:chemosS}-\eqref{eq:chemosX} cannot reach $s = s_{in}$ in finite time and stays non negative.
Therefore the set ${\cal D}$ is (forward) invariant.

\subsection{Properties of the Dynamics}
On the invariant domain ${\cal D}$, we introduce the change of variables
\[
\zeta=(s,z) \quad \mbox{with} \quad z=\frac{x}{s_{in}-s} ,
\]
under which the dynamics become
\begin{align} \label{eq:chemosz}
\dot \zeta = \left[\begin{array}{c}\dot s\\ \dot z \end{array}\right]=f(\zeta,u) :=
 \left[\begin{array}{c}
  \Big(u - \mu \big(s,(s_{in} - s)z\big)z \Big) (s_{in} - s) \\
  \mu \big(s,(s_{in} - s)z\big) (1-z)z
 \end{array}\right].
\end{align}
We will denote $s_{t_0,\xi,u}(\cdot)$ and $ z_{t_0,\xi,u}(\cdot) $ the solution of \eqref{eq:chemosz}, with initial condition $\xi = (s_0,z_0)= (s(t_0),z(t_0) ) \in {\cal D}$ and control $u(\cdot) \in {\cal U}(t_0,T)$.
The cumulated biogas production becomes
\begin{equation}
 \int_{t_{0}}^T \phi \big( s_{t_0,\xi,u}(t),z_{t_0,\xi,u}(t) \big) z_{t_0,\xi,u}(t) \,dt
\end{equation}
with
\begin{equation} \label{eq:phi}
 \phi(s,z) = \mu \big(s, (s_{in}-s)z \big) (s_{in}-s)
\end{equation}
and we will denote
\begin{equation}
 \label{phibar}
 \overline \phi (z) = \max_{s \in (0,s_{in}) } \phi(s,z).
\end{equation}

We can now establish an important property of the controlled dynamics.
\begin{lemma} \label{lemcompinv}
 The trajectories of the system \eqref{eq:chemosz} for a given initial condition $\xi=(s_0,z_0) \in \cal D$, for all admissible controls, remain in the set
 \begin{equation} \label{Lxi}
 	{\cal L}(\xi)= [0,s_{in}] \times [\min(z_{0},1),\max(z_0,1)].
 \end{equation}
\end{lemma}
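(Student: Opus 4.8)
The plan is to exploit the product structure of $\mathcal{L}(\xi)$ and treat the two coordinates separately. For the substrate coordinate the inclusion is already in hand: it was noted above that on $\mathcal{D}$ the solutions stay non-negative and never reach $s_{in}$ in finite time, so $s_{t_0,\xi,u}(t)\in[0,s_{in})\subset[0,s_{in}]$ for every admissible control. Thus the entire content of the statement lies in the second coordinate, namely showing that $z_{t_0,\xi,u}(t)$ remains in $[\min(z_0,1),\max(z_0,1)]$.

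The key observation I would use is that the $z$-equation in \eqref{eq:chemosz} is of logistic type,
\[
\dot z = g(t)\,(1-z)\,z, \qquad g(t) := \mu\big(s(t),(s_{in}-s(t))z(t)\big) \ge 0,
\]
where the non-negativity of $g$ is granted by Assumption \ref{assum:growthfct}, and where $g$ is independent of the control $u$ and may be viewed, along a fixed trajectory, merely as a non-negative function of time. Two facts follow immediately: the line $\{z=1\}$ is invariant since $\dot z=0$ there, and off this line the sign of $\dot z$ coincides with the sign of $1-z$, so $z(\cdot)$ is monotone and is pushed toward $1$. To make this quantitative I would integrate the scalar equation explicitly: the substitution $w=1/z$ turns it into the linear equation $\dot w = g(t)(1-w)$, whose solution is
\[
w(t) = 1 + \big(1/z_0 - 1\big)\,e^{-G(t)}, \qquad G(t)=\int_{t_0}^t g(\tau)\,d\tau \ge 0 .
\]
Reading off the bounds then finishes the argument: if $z_0<1$ then $1/z_0-1>0$ and $e^{-G(t)}\in(0,1]$ force $w(t)\in(1,1/z_0]$, i.e.\ $z(t)\in[z_0,1)$; symmetrically $z_0>1$ gives $z(t)\in(1,z_0]$; and $z_0=1$ gives $z(t)\equiv1$. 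In all cases $z(t)\in[\min(z_0,1),\max(z_0,1)]$, as claimed.

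The step requiring the most care is ensuring that the trajectory never crosses the invariant line $\{z=1\}$, since the monotonicity of $z$ alone does not rule out an overshoot at the boundary. The explicit formula above settles this automatically, because $w(t)$ stays strictly on the same side of $1$ as $w_0$. Alternatively one could invoke uniqueness of solutions, which holds since $f$ inherits Lipschitz continuity in $\zeta$ from the Lipschitz assumption on $\mu$, together with the fact that $z\equiv1$ is itself a solution of the $z$-equation. The only remaining routine check is that $g$ is finite and continuous along the trajectory, which follows from the continuity of $\mu$ and of the solution, so that the scalar ODE is well posed and the integration is legitimate.
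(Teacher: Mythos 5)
Your proof is correct and rests on the same observation as the paper's: since $\mu\geqslant 0$ (Assumption \ref{assum:growthfct}) and the $z$-equation in \eqref{eq:chemosz} has the logistic form $\dot z = g(t)(1-z)z$ with $g\geqslant 0$, the $z$-component is monotone toward $1$ and cannot cross the invariant line $\{z=1\}$, while the $s$-component stays in $[0,s_{in})$ by the invariance of $\cal D$ noted earlier. The paper states this in one line, whereas you additionally integrate the scalar equation explicitly (equivalently to the closed form \eqref{eq:solz} used later in the proof of Lemma \ref{limz}) to rule out any overshoot at $z=1$; this is a more careful rendering of the same argument, not a different one.
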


\begin{proof}
From Assumption \ref{assum:growthfct} we have that $\mu(\cdot,\cdot) \geqslant 0$ and since the solutions $z(\cdot)$ satisfy \eqref{eq:chemosz}, we then have the following
\[
\min(z_{0},1) \leqslant z_{t_0,\xi,u}(t) \leqslant \max(z_{0},1)
\]
for all $t \geqslant 0$, for any admissible control $u(\cdot)$.
\end{proof}

In the following, we consider initial conditions that guarantee the controllability of the $s$ variable.
\begin{assumption} \label{assum:umax}
 We suppose that the initial condition $\xi \in \cal D$ is such that
 \[
  \max_{(s,z) \in {\cal L}(\xi)} \mu \big( s,(s_{in}-s) z \big) z < u_{max}.
 \]
\end{assumption}
In practice, for a given initial condition it possible to choose $u_{\max}$ such that the previous inequality is satisfied.

We now define a class of feedbacks, that will play an important role,  and that are based on the notion of {\em most rapid approach path}, a well known concept in the theory of optimal control, see for example \cite{rapaport2005competition,hartl1987new}.

\begin{defn}
 For $(s,z) \in {\cal L}(\xi)$, we define the {\em most rapid approach feedback} to a given substrate level $s^* \in [0,s_{in})$, as
\begin{equation} \label{def:MRAPfeedback}
\psi_{s^*}(s,z)=\left|\begin{array}{ll}
0 & \mbox{if } s>s^*,\\
\mu(s^*,(s_{in}-s^*)z) \,z \quad &  \mbox{if } s=s^*,\\
u_{\max} & \mbox{if } s<s^*.
\end{array}\right.
\end{equation}
\end{defn}

Clearly, with Assumption \ref{assum:umax} this feedback is well defined, so that, associated with this control, for every initial condition $\xi \in \cal D$, there exists a unique absolutely continuous solution for the dynamics \eqref{eq:chemosz}.

\begin{lemma} \label{lem:sStarReachable}
 For any $\xi \in \cal D$ satisfying Assumption \ref{assum:umax}, a given substrate level $s^* \in (0,s_{in})$ is reachable in finite time with the feedback $\psi_{s^*}$.
\end{lemma}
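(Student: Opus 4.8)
The plan is to split on the position of the initial substrate level $s_0$ relative to $s^*$ and, in each case, to bound the rate $\dot s$ away from zero so that $s$ is driven monotonically to $s^*$ in finite time. Throughout, Lemma~\ref{lemcompinv} guarantees that the trajectory remains in the compact box $\mathcal{L}(\xi)$, so in particular $z(t)$ stays in $[\min(z_0,1),\max(z_0,1)]$, an interval bounded away from $0$ since $z_0>0$; moreover $s(t)<s_{in}$ keeps $s_{in}-s$ strictly positive. The case $s_0=s^*$ is trivial (the target is reached at time $0$), so it remains to treat $s_0>s^*$ and $s_0<s^*$. Well-posedness of the closed-loop solution under the discontinuous feedback $\psi_{s^*}$ has already been granted in the paragraph preceding the statement, so I may work directly with the absolutely continuous trajectory it generates.

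First I would treat $s_0>s^*$, where the feedback prescribes $u=0$ as long as $s>s^*$. From \eqref{eq:chemosz} the first component reads $\dot s = -\,\mu\big(s,(s_{in}-s)z\big)\,z\,(s_{in}-s)$, which is continuous and, on the compact set $[s^*,s_0]\times[\min(z_0,1),\max(z_0,1)]$, strictly negative: indeed $s\ge s^*>0$ forces $\mu\big(s,(s_{in}-s)z\big)>0$ by Assumption~\ref{assum:growthfct}, while $z>0$ and $s_{in}-s>0$. Hence this expression attains a maximum value $-m$ on that set, with $m>0$, and as long as $s(t)>s^*$ we have $\dot s\le -m$. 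Integrating gives $s(t)\le s_0-mt$, so $s$ reaches $s^*$ no later than $(s_0-s^*)/m$.

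The case $s_0<s^*$ is symmetric: the feedback sets $u=u_{\max}$ while $s<s^*$, and $\dot s = \big(u_{\max}-\mu\big(s,(s_{in}-s)z\big)z\big)(s_{in}-s)$. Here I would invoke Assumption~\ref{assum:umax}, which provides a uniform gap $u_{\max}-\mu\big(s,(s_{in}-s)z\big)z\ge\delta>0$ over all of $\mathcal{L}(\xi)$; combined with $s_{in}-s\ge s_{in}-s^*>0$ on $[s_0,s^*]$, this yields $\dot s\ge \delta(s_{in}-s^*)>0$, whence $s$ increases to $s^*$ in time at most $(s^*-s_0)/(\delta(s_{in}-s^*))$.

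The only delicate point is guaranteeing that the driving rate does not degenerate as the trajectory approaches $s^*$. This is precisely why the hypothesis $s^*>0$ matters in the first case, since it keeps $\mu$ bounded below through the growth assumption on the compact box, and why Assumption~\ref{assum:umax} is imposed in the second, as it furnishes the uniform gap $\delta$ up to and including $s=s^*$. Because both bounds hold uniformly on the relevant compact sets (including the endpoint $s=s^*$), no vanishing of $\dot s$ can occur en route, and finite-time reachability follows in all cases.
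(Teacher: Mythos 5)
Your proof is correct and follows essentially the same route as the paper's: in each case ($s_0>s^*$ with $u=0$, $s_0<s^*$ with $u=u_{\max}$) one bounds $\dot s$ uniformly away from zero on a compact set, using positivity of $\mu$ for $s\geqslant s^*>0$ in the first case and the gap from Assumption~\ref{assum:umax} in the second. Your restriction to the compact box $[s^*,s_0]\times[\min(z_0,1),\max(z_0,1)]$ is, if anything, a slightly cleaner way to justify the uniform bounds than the paper's constants $k_\pm$.
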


\begin{proof}  First, using the monotonicity properties of $\mu(\cdot,\cdot)$ of Assumption \ref{assum:growthfct}, it is clear that $ \psi_{s^*}$ is admissible provided Assumption \ref{assum:umax} is satisfied.

To show that $s^*$ is reachable in finite time, it is enough to note that when $s_{t_0,\xi,\psi_{s^*}}(t) > s^*$, for $t$ in a given open interval $I$,
we have
\begin{align*}
 \dot s_{t_0,\xi,\psi_{s^*}} (t)  = - \mu \big(s,(s_{in} - s) z \big) z (s_{in} - s) \leqslant k_- < 0, \quad \forall \, t\in I
\end{align*}
with $ k_- = - \min_{s \in (s^*,s_{in}) } \mu \big(s,(s_{in} - s) \min(z_0,1) \big) \min(z_0,1) (s_{in} - s^*)$.
This insures that $s^*$ is always reachable in finite time from $s_0 > s^*$.

Analogously, if $s_{t_0,\xi,\psi_{s^*}}(t)<s^*$, for $t \in I$, we have from Assumption \ref{assum:umax}
\begin{align*}
 \dot s_{t_0,\xi,\psi_{s^*}} (t)  = \left[ u_{\max} - \mu \big(s,(s_{in} - s)z\big)z \right] (s_{in} - s) \geqslant k_+ > 0 , \quad \forall \, t\in I
\end{align*}
with $ k_+ = \left[ u_{\max} - \max_{s \in (0,s^*) } \mu \big(s,(s_{in} - s) \max(z_0,1) \big) \max(z_0,1) \right] (s_{in} - s^*)$.
Then $s^*$ is reachable from $s_0 < s^*$, again in finite time.

\end{proof}

\begin{remark}

It should be pointed out that there is a similarity with the {\em turnpike} property \cite{zaslavski2006turnpike,TZ15} when using  the controller \eqref{def:MRAPfeedback}.
The turnpike property has received great attention in the literature (see for instance \cite{grune2017relation,rapaport2005competition,hartl1987new,rapaport2007nonturnpike}), and recent results give sufficient optimality conditions \cite{FKJB17,FB17}.
However, we shall show in the next sections that the value $s^*$, which determines the turnpike, has to depend on the initial condition (excepted for the very particular case when the initial condition belongs to the invariant set $\{z=1\}$ that has been solved in \cite{GSH15}).
So, we are not in the usual framework of a single turnpike \cite{FKJB17,FB17} or isolated turnpikes \cite{RC04},  and the results of the literature do not apply.
\end{remark}

For the problem on an infinite horizon, we will consider \emph{persistently exciting} controls, which are defined as satisfying
\[
 \int_{t_0}^T u(t) \, dt \stackrel[T \rightarrow \infty]{}{\longrightarrow} \infty.
\]
 As the next Lemma shows, the trajectories associated with these controls are such that $z_{t_0,\xi,u}(t)$ converges to 1, which is essential in our approach. Furthermore, for non persistently exciting controls, $s_{t_0,\xi,u}(t)$ converges to 0 and thus the biogas production also converges to 0.
As a consequence, the controls that maximize biogas production are necessarily persistently exciting controls.

\begin{lemma} \label{limz}
 For all initial conditions $\xi \in \cal D$ and for all persistently exciting controls $u(\cdot) \in {\cal U}(0,\infty)$, we have
 \[
  \lim_{t \rightarrow \infty} z_{0,\xi,u}(t) = 1
 \]
 and
 \[
  \lim_{\delta \rightarrow 0} \int_{0}^\infty \delta e^{-\delta t}  z_{0,\xi,u}(t)  \,dt =\lim_{T \rightarrow \infty} \frac{1}{T} \int_{0}^T z_{0,\xi,u}(t) \,dt = 1.
 \]
 Moreover, for non persistently exciting controls, we have
 \[
 \lim_{t\to+\infty} s_{0,\xi,u}(t)=0 .
 \]
\end{lemma}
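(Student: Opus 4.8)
The plan is to exploit the monotonicity of $z$ together with two integral identities read off from \eqref{eq:chemosz}. Throughout, write $(s,z)=(s_{0,\xi,u},z_{0,\xi,u})$, $w(t):=s_{in}-s(t)>0$ (which stays positive for all finite $t$ since trajectories in $\mathcal D$ cannot reach $s=s_{in}$ in finite time), and $g(t):=\mu\big(s(t),(s_{in}-s(t))z(t)\big)\geqslant 0$. From the second line of \eqref{eq:chemosz}, $\dot z=g\,z(1-z)$, so $\operatorname{sign}\dot z=\operatorname{sign}(1-z)$; by Lemma \ref{lemcompinv} the trajectory stays in $\mathcal L(\xi)$, hence $z$ is monotone (non-decreasing if $z_0\leqslant 1$, non-increasing if $z_0\geqslant 1$) and bounded, so it converges to some limit $z_\infty$. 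The whole problem then reduces to proving $z_\infty=1$ under persistent excitation.

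First I would record, from the first line of \eqref{eq:chemosz}, that $\dot w=(gz-u)w$, hence $\tfrac{d}{dt}\log w=gz-u$ and
\[
\log w(T)=\log w(0)+\int_0^T g\,z\,dt-\int_0^T u\,dt .
\]
Then I argue by contradiction: suppose $z_\infty\neq 1$. Since $z$ is monotone and already bounded away from $0$ (it stays $\geqslant\min(z_0,1)>0$), the assumption $z_\infty\neq 1$ keeps $z\,|1-z|$ bounded away from $0$ along the trajectory, while $\int_0^\infty|\dot z|\,dt=|z_\infty-z_0|<\infty$; dividing, this yields $\int_0^\infty g\,dt<\infty$, and a fortiori $\int_0^\infty g\,z\,dt<\infty$ because $z$ is bounded. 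Inserting this in the displayed identity and using that the control is persistently exciting, i.e. $\int_0^T u\,dt\to\infty$, forces $\log w(T)\to-\infty$, that is $s(t)\to s_{in}$. But then $(s_{in}-s)z\to 0$ and, by continuity of $\mu$, $g(t)\to\mu(s_{in},0)>0$, which makes $\int_0^\infty g\,dt=\infty$ — a contradiction. Hence $z_\infty=1$. The two asymptotic means follow from standard Abelian and Cesàro arguments: since $z(t)\to 1$ with $z$ bounded, splitting each integral at a large time $T_\varepsilon$ beyond which $|z-1|<\varepsilon$ shows that both $\delta\int_0^\infty e^{-\delta t}z\,dt$ and $\tfrac1T\int_0^T z\,dt$ tend to $1$.

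For the last statement, a non persistently exciting control satisfies $\int_0^\infty u\,dt<\infty$ (the integral is monotone in $T$). I would first prove that $s$ converges by introducing the Lyapunov-type function $h(t):=s(t)+s_{in}\int_t^\infty u(\tau)\,d\tau$, which is well defined and satisfies $\dot h=\dot s-s_{in}u=-u\,s-g\,z\,w\leqslant 0$; being non-increasing and bounded below by $0$, it converges, and since $\int_t^\infty u\to 0$ we get $s(t)\to s_\infty:=\lim h\geqslant 0$. Finally I would exclude $s_\infty>0$: in that case $s$ is eventually bounded away from $0$, so the pair $\big(s,(s_{in}-s)z\big)$ eventually lies in a compact subset of $\{s>0\}$ on which $\mu$ is continuous and strictly positive, giving $g(t)\geqslant m>0$ for large $t$; then $\tfrac{d}{dt}\log w=gz-u\geqslant m\min(z_0,1)-u$ integrates to $\log w(T)\to+\infty$, contradicting $w\leqslant s_{in}$. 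Therefore $s_\infty=0$.

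I expect the crux to be the implication $z_\infty\neq 1\Rightarrow s\to s_{in}\Rightarrow\int_0^\infty g=\infty$ in the persistently exciting case: this is where the coupling between the $z$- and $s$-components — encoded in the logarithmic identity for $w$ — must be used, and where the hypotheses $\mu(s_{in},0)>0$ and persistent excitation come in together. The non persistently exciting case is easier, the only mild care being the lower bound $g\geqslant m>0$, which rests on compactness and the strict positivity of $\mu$ for $s>0$.
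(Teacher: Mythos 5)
Your proof is correct. For the persistently exciting case the core of your argument is the same contradiction as in the paper: boundedness of $\int_0^\infty \mu\,dt$ together with $\int_0^T u\,dt\to\infty$ forces $s(t)\to s_{in}$, whence $\mu\to\mu(s_{in},0)>0$ (positive by the monotonicity of $x\mapsto\mu(s,x)$) and the integral diverges after all. You reach the hypothesis ``$\int g<\infty$'' differently, though: the paper integrates the $z$-equation explicitly and reads off that $z\to 1$ iff $\int\mu$ diverges, while you use the monotonicity of $z$ and its finite total variation to bound $\int g$ when $z_\infty\neq 1$; likewise you track $\log(s_{in}-s)$ directly where the paper uses the conservation law for $s+x$ and the explicit formula for $x(\cdot)$. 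Both are sound and of comparable length. The Ces\`aro/Abelian step is handled the same way in both proofs (the paper additionally cites Gr\"une for the equality of the two means, but your direct splitting argument covers both limits). The genuinely different piece is the non--persistently-exciting case: the paper introduces $\varphi(t)=(s_{in}-s(t))e^{\int_0^t u}$, shows $\varphi'\geqslant 0$ and bounded, and invokes Barbalat's lemma to get $\varphi'\to 0$ and hence $s\to 0$; you instead use the monotone function $h(t)=s(t)+s_{in}\int_t^\infty u$ to get convergence of $s$, and then exclude a positive limit by showing $\log(s_{in}-s)$ would blow up. Your route is more elementary and self-contained (no Barbalat, no uniform-continuity check on $\varphi'$), and it also sidesteps the alternative $s\to s_{in}$ that the paper's final continuity step leaves slightly implicit; the paper's route is shorter to state once Barbalat is granted.
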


\begin{proof}

From equation \eqref{eq:chemosz}, the solution $z(\cdot)=z_{0,\xi,u}(\cdot)$ can be written as follows
\begin{equation}
    \label{eq:solz}
z(t)=\frac{z_0+e^{\int_{t_0}^t \mu(s(\tau),x(\tau))\,d\tau}}{1+z_0\Big(e^{\int_{t_0}^t \mu(s(\tau),x(\tau))\,d\tau}-1\Big)}
\end{equation}
where $s(\cdot)=s_{0,\xi,u}(\cdot)$, $x(\cdot)=x_{0,\xi,u}(\cdot)$.
From equation \eqref{eq:chemosX}, the solution $x(\cdot)$ is such that
\[
x(t)=x(t_0)e^{\int_{t_0}^t \big(\mu(s(\tau),x(\tau))-u(\tau)\big)\,d\tau} \; .
\]
Therefore, if the integral function
\begin{equation}
    \label{def:intmu}
t \mapsto \int_{t_0}^t \mu(s(\tau),x(\tau))\,d\tau , \; t \geq t_0
\end{equation}
is bounded, then $x(t)$ must converge asymptotically to $0$ when $t$ goes to $+\infty$ and $u(\cdot)$ is a persistently exciting control. Moreover, from equations \eqref{eq:chemosS}, \eqref{eq:chemosX} we have
\[
   \frac{d}{dt}\big(s(t)+x(t)\big)=u(t)\big(s_{in}-s(t)+x(t)\big)\\
\]
so that
\[
s(t)+x(t)=s_{in}+(s(t_0)+x(t_0)-s_{in}\big)e^{-\int_{t_0}^t u(\tau)\,d\tau}
\]
and then $s(t)$ must converge to $s_{in}$ when $t$ goes to $+\infty$.
Consequently, by continuity of the function $\mu$, there exists $T>t_0$ such that
\[
\mu(s(t),x(t)))>\mu(s_{in},0)/2>0
\]
for any $t>T$, which implies that the integral defined in \eqref{def:intmu} goes to $+\infty$ when $t$ goes to $+\infty$, which is a contradiction. We deduce that this integral cannot be bounded and from equation \eqref{eq:solz} that $z(t)$ converges to $1$ when $t$ goes to $+\infty$.

A proof of the equality of limits of the integrals
\[
  \lim_{\delta \rightarrow 0} \int_{0}^\infty \delta e^{-\delta t}  z_{0,\xi,u}(t)  \,dt =\lim_{T \rightarrow \infty} \frac{1}{T} \int_{0}^T z_{0,\xi,u}(t) \,dt
 \]
can be found in \cite[Lemma 3.5]{grune1998asymptotic}.
For the value of the limits we use the fact that $z_{0,\xi,u}(t)$ converges to 1 :
for all $\tilde \varepsilon > 0$, there exits a time $t_{\tilde \varepsilon}$ such that, for all $t\geqslant t_{\tilde \varepsilon}$,
\[
  |z_{0,\xi,u}(t)-1|< \tilde \varepsilon.
\]
Then, for all $T\geqslant \max( t_{\tilde \varepsilon} , t_{\tilde \varepsilon} / \tilde \varepsilon )$
\begin{align*}
 \left| \frac{1}{T} \int_{0}^T z_{0,\xi,u}(t) \,dt -1 \right|
 & \leqslant  \frac{1}{T}  \int_{0}^{t_{\tilde\varepsilon}}\left| z_{0,\xi,u}(t)-1 \right| \,dt
 +  \frac{1}{T} \int_{t_{\tilde\varepsilon}}^T \left| z_{0,\xi,u}(t)-1 \right| \,dt   \\
 &<  \frac{t_{\tilde \varepsilon}}{T}  |z_0-1|  + \left(1 - \frac{ t_{\tilde \varepsilon}}{T}\right) \tilde \varepsilon \\
 &<  \tilde \varepsilon \left( |z_0-1| +1 \right).
\end{align*}
With this, for all $\varepsilon >0$, we can take $\tilde \varepsilon = \varepsilon / ( |z_0-1| +1 ) $ and then we have, for $T\geqslant \max( t_{\tilde \varepsilon} , t_{\tilde \varepsilon} / \tilde \varepsilon )$
\[
\left| \frac{1}{T} \int_{0}^T z_{0,\xi,u}(t) \,dt -1 \right|  < \varepsilon.
\]

Finally, we prove that for non persistently exciting controls, $s_{0,\xi,u}(t)$ converges to 0. Therefore, suppose that $u(\cdot)$ is an admissible control with a finite integral and we define, for all $ t \geqslant 0$,
\[
I(t):= \int_{0}^t u(\tau) \,d\tau < \infty
\]
and
\[
\varphi (t) := (s_{in}-s_{0,\xi,u}(t)) e^{I(t)}.
\]
Then
\[
\varphi' (t) = \phi \big( s_{0,\xi,u}(t), z_{0,\xi,u}(t) \big) z_{0,\xi,u}(t) e^{I(t)} \geqslant 0
\]
and since $\varphi (t)$ is bounded, we can deduce that $\varphi (t)$ converges as $t$ goes to infinity.
Note as well that $\varphi'$ is absolutely continuous and thus uniformly continuous. We can therefore use Barbalat's Lemma \cite[Lemma 4.2]{Khalil} to get that $\varphi'(t)$ converges to 0. Then, as $z_{0,\xi,u}(t)$ cannot reach 0 (Lemma \ref{lemcompinv}), we have that $\phi \big( s_{0,\xi,u}(t), z_{0,\xi,u}(t) \big)$ must converge to 0 and by continuity we conclude that $s_{0,\xi,u}(t)$ converges to 0.

\end{proof}

%%%%%%%%%%%%%%%%%%%%%%%%%%%%%%%%%%%%%%%%%%%%%%%%%%%%%%%%%%%%%%%%%%%%%%%%%%%%%%%%%%%%%%

\section{Infinite Horizon and Average Reward}
\label{secinfinite}

In this section, we study the problem of maximizing biogas production over an infinite horizon.
Since the dynamics \eqref{eq:chemosz} are autonomous, without loss of generality, we can assume here that $t_0=0$ and we will then denote $s_{\xi,u}(\cdot)$ and $ z_{\xi,u}(\cdot)$ solutions of \eqref{eq:chemosz}.

We start by defining the average biogas production during a time interval $[0,T]$ as
\begin{align} \label{eq:avgReward}
 J^T(\xi,u(\cdot)) = \frac{1}{T} \int_0^T  \phi \big(s_{\xi,u}(t),z_{\xi,u}(t)\big) z_{\xi,u}(t) \,dt
\end{align}
and we consider the inferior and superior limits as $T$ goes to infinity
\begin{align}\label{averagerewardinftyinf}
 \underline J^{\infty}(\xi,u(\cdot)) &=  \liminf_{T \rightarrow \infty}   J^T(\xi,u(\cdot)), \\
 \overline J^{\infty}(\xi,u(\cdot)) &=  \limsup_{T \rightarrow \infty}  J^T(\xi,u(\cdot)).
 \label{averagerewardinftysup}
\end{align}
The optimal control problems in consideration here consist in maximizing these functionals with respect to the dilution rate $u(\cdot) \in {\cal U}(0,\infty)$, for any initial condition $\xi \in {\cal D}$.
More precisely, the value functions of these optimal control problems are
\begin{align} \label{Vinfty-liminf}
 \underline V^{\infty}(\xi) &= \sup \Big\{ \underline J^{\infty}(\xi,u(\cdot)) : u(\cdot) \in {\cal U}(0,\infty)  \Big\},   \\
 \overline  V^{\infty}(\xi) &= \sup \Big\{ \overline  J^{\infty}(\xi,u(\cdot)) : u(\cdot) \in {\cal U}(0,\infty)  \Big\}.
\label{Vinfty-limsup}
\end{align}

We need to consider the inferior and superior limits here as there exists controls for which the rewards \eqref{averagerewardinftyinf} and \eqref{averagerewardinftysup} may differ.
Indeed, this is the case for certain oscillating controls as can be seen in the example %\ref{counterex}
in the Appendix.
Nevertheless, we will show that the value functions \eqref{Vinfty-liminf} and \eqref{Vinfty-limsup} are in fact equal.
Moreover, we will connect these problems to the problem with a discounted reward when the discount factor goes to 0, as in \cite{grune1998relation}, and we will identify a set of controls that are optimal for all three problems.

To this end, we now define the following discounted reward, for a discount rate $\delta>0$
\begin{equation} \label{discntreward}
 J_{\delta}(\xi, u(\cdot)) = \delta \int_0^\infty  e^{-\delta t} \phi \big(s_{\xi,u}(t),z_{\xi,u}(t)\big) z_{\xi,u}(t) \,dt.
\end{equation}
This type of cost function is often used in problems related to economics for which the term $e^{-\delta t}$ represents a discount rate or a preference for the present \cite{carlson2012infinite}. In our setting, the use of this discounted reward can be seen as a preference for earlier rather than later production.
Here, the integral is rescaled with the discount factor $\delta$ in order to guarantee that, when we take the limit as $\delta$ goes to 0, the reward remains finite.

The value function of the optimal control problem for a given $\delta$ is then
\begin{align} \label{Vdelta}
 V_{\delta}(\xi) &= \sup \Big\{ J_{\delta}(\xi,u(\cdot)) : u(\cdot)  \in {\cal U}(0,\infty)  \Big\}.
\end{align}

Note that both average rewards \eqref{averagerewardinftyinf} and \eqref{averagerewardinftysup}, as well as the discounted reward \eqref{discntreward}, are well defined as the following Lemma shows.
\begin{lemma} \label{finterewards}
 For all  $\xi \in \cal D$, for all admissible controls $u(\cdot)\in {\cal U}(0,\infty)$ and for all $\delta >0$, the rewards $\underline J^{\infty}(\xi,u(\cdot))$, $\overline J^{\infty}(\xi,u(\cdot))$ and $J_{\delta}(\xi, u(\cdot))$ are uniformly bounded.
\end{lemma}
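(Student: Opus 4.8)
The plan is to reduce all three boundedness claims to a single observation: along any admissible trajectory the running reward integrand is non-negative and bounded by a constant depending only on $\xi$, and then to exploit the fact that the averaging factor $1/T$ and the discount normalization $\delta$ are chosen precisely so that each integrates to one.

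First I would record the integrand explicitly. Writing $g(s,z) := \phi(s,z)\,z = \mu\big(s,(s_{in}-s)z\big)(s_{in}-s)\,z$, Assumption \ref{assum:growthfct} gives $g \ge 0$ on $\mathcal{D}$. By Lemma \ref{lemcompinv}, for a fixed initial condition $\xi=(s_0,z_0)$ every trajectory $\big(s_{\xi,u}(\cdot),z_{\xi,u}(\cdot)\big)$ remains in the compact set $\mathcal{L}(\xi)=[0,s_{in}]\times[\min(z_0,1),\max(z_0,1)]$ of \eqref{Lxi}, independently of the control. On $\mathcal{L}(\xi)$ the density $x=(s_{in}-s)z$ is bounded by $s_{in}\max(z_0,1)$, so there is no blow-up in the argument of $\mu$; since $\mu$ is Lipschitz continuous (hence continuous) and the remaining factors are continuous, $g$ is continuous on the compact set $\mathcal{L}(\xi)$ and therefore attains a finite maximum
\[
 M(\xi) := \max_{(s,z)\in\mathcal{L}(\xi)} g(s,z) < \infty .
\]
Thus $0 \le g\big(s_{\xi,u}(t),z_{\xi,u}(t)\big) \le M(\xi)$ for all $t\ge 0$ and all admissible $u(\cdot)$.

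With this uniform pointwise bound the three estimates are immediate. For the finite-horizon average \eqref{eq:avgReward} one gets $0 \le J^T(\xi,u(\cdot)) \le \frac{1}{T}\int_0^T M(\xi)\,dt = M(\xi)$ for every $T$ and every control, so passing to the inferior and superior limits yields $0 \le \underline J^{\infty}(\xi,u(\cdot)) \le \overline J^{\infty}(\xi,u(\cdot)) \le M(\xi)$. For the discounted reward \eqref{discntreward} the same bound together with $\delta\int_0^\infty e^{-\delta t}\,dt = 1$ gives $0 \le J_{\delta}(\xi,u(\cdot)) \le \delta\int_0^\infty e^{-\delta t} M(\xi)\,dt = M(\xi)$ for every $\delta>0$. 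Hence all three functionals are bounded by the single constant $M(\xi)$, uniformly in $u(\cdot)$, in $T$ and in $\delta$.

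There is essentially no hard step here; the only point requiring a moment of care is verifying that the integrand really is bounded, i.e.\ that confining $(s,z)$ to $\mathcal{L}(\xi)$ also confines the density $x=(s_{in}-s)z$ appearing inside $\mu$, since Assumption \ref{assum:growthfct} only guarantees $x\mapsto\mu(s,x)x$ to be monotone and not a priori bounded. Once the compactness of $\mathcal{L}(\xi)$ is invoked, continuity of $\mu$ closes the argument, and the normalizations $1/T$ and $\delta$ are exactly what render the bound independent of the horizon and of the discount factor.
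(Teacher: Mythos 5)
Your proposal is correct and follows essentially the same route as the paper: bound the integrand uniformly along trajectories confined to ${\cal L}(\xi)$ (Lemma \ref{lemcompinv}) and observe that the normalizations $1/T$ and $\delta\int_0^\infty e^{-\delta t}\,dt=1$ make the bound independent of $T$, $\delta$ and $u(\cdot)$. The only cosmetic difference is that the paper extracts the explicit constant $\overline\phi(0)\max(z_0,1)$ from the monotonicity of $z\mapsto\phi(s,z)$, whereas you obtain a constant $M(\xi)$ from compactness of ${\cal L}(\xi)$ and continuity of the integrand; both are valid.
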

\begin{proof}
From the monotonicity properties of Assumption \ref{assum:growthfct}, we have that the function $z \mapsto \phi(s,z)$ is non increasing. for all $s>0$.
Thus, for all $t \geqslant 0$
\[
\phi(s_{\xi,u}(t),z_{\xi,u}(t)) \leqslant \overline \phi(0).
\]
The uniform boundedness of the rewards then follows from Lemma \ref{lemcompinv}.

\end{proof}

\subsection{ Relation Between Average and Discounted Biogas Production Problems}

We now show how the average and discounted biogas production problems are related when the discount factor $\delta$ goes to 0.

In the following, we will consider the discounted reward \eqref{discntreward} as a function of the trajectory $\zeta(\cdot)=\big( s_{\xi,u}(\cdot), z_{\xi,u}(\cdot) \big)$ instead of the control and with a slight abuse of notation, we will denote it as $J_{\delta}( \zeta(\cdot) )$.
Define the set valued map
\[
F(\zeta) := \bigcup_{u \in [0,u_{\max}]} f(\zeta,u)
\]
and consider the set of all forward trajectories of \eqref{eq:chemosz} with initial
condition $\xi$
%\begin{equation*}
% \mathcal{S}(\xi) := \Big\{ \big( s_{\xi,u}(\cdot), z_{\xi,u}(\cdot) \big) :
% u(\cdot)  \in {\cal U}(0,\infty)   \Big\}
%\end{equation*}
\begin{equation*}
 \mathcal{S}(\xi) := \Big\{ \zeta(\cdot) \in {\cal AC}([0,\infty),{\cal L}(\xi)) :
 \zeta(0)=\xi, \; \dot \xi(t) \in F(\xi(t)) \mbox{ a.e. } t \in [0,\infty) \Big\}
\end{equation*}
where ${\cal AC}([0,\infty),{\cal L}(\xi))$ denotes the set of absolutely continuous functions from $[0,\infty)$ to ${\cal L}(\xi)$.
We recall from the Filippov Selection Theorem (see for instance \cite{Vinter2000}) that the optimal control problem \eqref{Vdelta} is equivalent to the optimization problem on $\mathcal{S}(\xi)$,
\[
 V_{\delta}(\xi) = \sup \Big\{ J_{\delta}( \zeta(\cdot) ) : \zeta(\cdot)  \in \mathcal{S}(\xi)    \Big\}.
\]

We now specify the topology that we will use to study the limit of the discounted biogas production problem when the discount factor $\delta$ goes to 0.

\begin{defn} \label{def:topoWeightedW11}
For $b>0$, we denote by $L^1\big(0,\infty;\R^2, e^{-bt}dt\big)$ the weighted Lebesgue space of measurable functions $y(\cdot)$ from $[0,\infty)$ to $\R^2$ such that
\[
 \int_0^{\infty} ||y(t)|| e^{-bt}dt < \infty
\]
and we denote $W^{1,1}\big(0,\infty;\R^2, e^{-bt}dt\big)$ the weighted Sobolev space of measurable functions $y(\cdot)$ satisfying
\[
 y(\cdot) \in L^1\big(0,\infty;\R^2, e^{-bt}dt\big) \text{ and } \dot y(\cdot) \in L^1\big(0,\infty;\R^2, e^{-bt}dt\big).
\]
We consider the topology on $W^{1,1}\big(0,\infty;\R^2, e^{-bt}dt\big)$ for which a sequence $y_n(\cdot)$ converges to $y(\cdot)$ if and only if
\begin{itemize}
 \item[-] $y_n(\cdot)$ converges uniformly to $y(\cdot)$ on compact intervals,
 \item[-] $\dot y_n(\cdot)$ converges weakly to $\dot y(\cdot)$ in $L^1\big(0,\infty;\R^2, e^{-bt}dt\big)$.
\end{itemize}
\end{defn}

Now, we define the notion of $\Gamma-$limit in our context (see \cite{dal2012introduction} for further details).

\begin{defn} \label{def:gammalim}
For a given initial condition $\xi \in \cal D$ and trajectory $\zeta(\cdot) \in{\cal S}(\xi)$, the \emph{$\Gamma-$lower limit} and \emph{$\Gamma-$upper limit} of $J_{\delta}(\cdot)$ are
\begin{align*}
 \Gamma- \liminf_{\delta \rightarrow 0} J_{\delta}( \zeta(\cdot) ) &= \sup_{ {\cal V \in  N}(\zeta(\cdot)) } \liminf_{\delta \rightarrow 0} \inf_{\eta(\cdot) \in {\cal V}} J_{\delta}( \eta(\cdot) ) \\
  \Gamma- \limsup_{\delta \rightarrow 0} J_{\delta}( \zeta(\cdot) ) &= \sup_{{\cal V \in  N}(\zeta(\cdot)) } \limsup_{\delta \rightarrow 0} \inf_{\eta(\cdot) \in {\cal V}} J_{\delta}( \eta(\cdot) ).
\end{align*}
Here, we denote ${\cal N}(\zeta(\cdot)) $ the set of all open neighborhoods of $\zeta(\cdot)$ of the topology on $W^{1,1}\big(0,\infty;\R^2, e^{-bt}dt\big)$ given in Definition \ref{def:topoWeightedW11}.
If both of these limits coincide, then the \emph{$\Gamma-$limit} of $J_{\delta}( \cdot)$ is
\[
 \Gamma- \lim_{\delta \rightarrow 0} J_{\delta}( \zeta(\cdot) ) = \Gamma- \liminf_{\delta \rightarrow 0} J_{\delta}( \zeta(\cdot) ) = \Gamma- \limsup_{\delta \rightarrow 0} J_{\delta}( \zeta(\cdot) ).
\]
\end{defn}

We now show that this $\Gamma-$limit is well defined, as well as the associated optimal control problem.

\begin{proposition} \label{prop:limVdelta}
For all $\xi \in \cal D$ and for all trajectories $\zeta(\cdot) \in{\cal S}(\xi)$, the $\Gamma-$limit of $J_{\delta}(\cdot)$ exists and we denote it as
\[
  J_0(\zeta(\cdot)) := \Gamma- \lim_{\delta \rightarrow 0} J_{\delta}(\zeta(\cdot) ).
\]
In addition, for all $\delta >0$, the suprema are attained
\[
 V_{\delta}(\xi) = \max_{u(\cdot)} J_{\delta}(\zeta(\cdot))
\]
and these maxima converge as $\delta$ goes to 0, pointwise in $\xi$,
\begin{equation} \label{def:V0}
 V_0(\xi) :=\max_{\zeta(\cdot)}  J_{0}(\zeta(\cdot))  =  \lim_{\delta \rightarrow 0} V_{\delta}(\xi).
\end{equation}
Finally, if $\zeta_{\delta}(\cdot)$ is an optimal trajectory for \eqref{Vdelta}, i.e. if $V_{\delta}(\xi)=J_{\delta}(\zeta_{\delta}(\cdot))$, and if $\zeta_{\delta}(\cdot)$ converges to $\zeta_0(\cdot)$ in $ {\cal S}(\xi)$, then $\zeta_0(\cdot)$ is an optimal control for \eqref{def:V0} and
\[
 V_0(\xi) = J_0(\zeta_0(\cdot) ) = \lim_{\delta \rightarrow 0} J_{\delta}(\zeta_{\delta}(\cdot)).
\]
\end{proposition}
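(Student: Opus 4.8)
The plan is to read this statement as a direct application of the fundamental theorem of $\Gamma$-convergence to the family $(J_\delta)_{\delta>0}$ viewed as functionals on the trajectory space $\mathcal{S}(\xi)$ endowed with the topology of Definition \ref{def:topoWeightedW11}. Indeed, once one knows that (i) $\mathcal{S}(\xi)$ is compact for this topology, (ii) each $J_\delta$ is continuous on $\mathcal{S}(\xi)$, and (iii) the $\Gamma$-limit $J_0$ of Definition \ref{def:gammalim} exists, then the attainment of $V_\delta(\xi)$, the convergence $V_\delta(\xi)\to V_0(\xi)=\max_{\zeta(\cdot)}J_0(\zeta(\cdot))$, and the optimality of any limit $\zeta_0(\cdot)$ of maximizers are exactly the conclusions of that theorem (see \cite{dal2012introduction}). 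The whole proof therefore reduces to establishing (i)--(iii), the last being the real difficulty.

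For (i), I would first observe that the set-valued map $F$ has compact and convex values: in \eqref{eq:chemosz} the second component of $f(\zeta,u)$ does not depend on $u$ while the first is affine in $u$, so $F(\zeta)$ is a line segment, and it is uniformly bounded since $\zeta$ stays in the compact set $\mathcal{L}(\xi)$ by Lemma \ref{lemcompinv}. Hence every $\zeta(\cdot)\in\mathcal{S}(\xi)$ is equi-Lipschitz, so by Arzel\`a--Ascoli the family is relatively compact for uniform convergence on compact intervals, while the uniform bound on $\dot\zeta$ gives, by Dunford--Pettis, relative weak compactness of the derivatives in the weighted $L^1$ space; closedness of $\mathcal{S}(\xi)$ under this convergence follows from the convexity of the values of $F$ via the standard closure theorem for differential inclusions (using Mazur's lemma). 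For (ii), since $(s,z)\mapsto\phi(s,z)z$ is continuous and bounded on $\mathcal{L}(\xi)$ by $\overline\phi(0)$ as in Lemma \ref{finterewards}, uniform convergence on compact intervals together with the integrable weight $\delta e^{-\delta t}$ yields continuity of $J_\delta$ by dominated convergence; with (i) this already gives the attainment $V_\delta(\xi)=\max_{u(\cdot)}J_\delta(\zeta(\cdot))$.

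Step (iii) is the \emph{crux}. Since $\mathcal{S}(\xi)$ is compact and metrizable, the compactness theorem for $\Gamma$-convergence guarantees that every sequence $\delta_n\to0$ has a subsequence along which $J_{\delta_n}$ $\Gamma$-converges, so it is enough to show that the $\Gamma$-lower and $\Gamma$-upper limits of Definition \ref{def:gammalim} coincide, which forces all subsequential limits to agree and defines $J_0$. The main obstacle is that, as $\delta\to0$, the probability density $\delta e^{-\delta t}$ pushes all its mass to $t=+\infty$, whereas convergence in the topology of Definition \ref{def:topoWeightedW11} only controls the trajectories on compact time intervals; thus $J_\delta$ genuinely depends on the asymptotic behaviour of the trajectory, which the topology does not see. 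This is exactly where Lemma \ref{limz} enters: along persistently exciting controls $z_{\xi,u}(t)\to1$, and the Abelian--Tauberian identity between the Abel mean $\delta\int_0^\infty e^{-\delta t}(\cdot)\,dt$ and the Ces\`aro mean $\frac1T\int_0^T(\cdot)\,dt$ (as in \cite[Lemma 3.5]{grune1998asymptotic}, already used to prove Lemma \ref{limz}) lets me express both $\Gamma$-limits through long-run averages of $\phi(s,z)z$, while for non persistently exciting controls $s_{\xi,u}(t)\to0$ makes the integrand vanish asymptotically so their contribution is negligible. Carrying this out yields $\Gamma\text{-}\liminf_{\delta\to0}J_\delta=\Gamma\text{-}\limsup_{\delta\to0}J_\delta=:J_0$.

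Finally, with (i)--(iii) in hand and equicoercivity provided by the compactness of $\mathcal{S}(\xi)$, the fundamental theorem of $\Gamma$-convergence gives $V_0(\xi)=\max_{\zeta(\cdot)}J_0(\zeta(\cdot))=\lim_{\delta\to0}V_\delta(\xi)$ and, for maximizers $\zeta_\delta(\cdot)$ converging to $\zeta_0(\cdot)$ in $\mathcal{S}(\xi)$, the optimality $V_0(\xi)=J_0(\zeta_0(\cdot))=\lim_{\delta\to0}J_\delta(\zeta_\delta(\cdot))$. The one point deserving care is that Definition \ref{def:gammalim} is written in the minimization form (with $\inf_{\eta\in\mathcal{V}}$), whereas here we maximize; I would therefore apply the theorem to $-J_\delta$, or equivalently exchange the roles of $\inf$ and $\sup$ throughout, which is routine.
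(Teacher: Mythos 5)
Your steps (i) and (ii) are sound and essentially match the paper: the paper obtains compactness of $\mathcal{S}(\xi)$ from the viability-theory compactness theorem for Marchaud maps (\cite[Theorem 3.5.2]{aubin2009viability}) rather than a hands-on Arzel\`a--Ascoli/Dunford--Pettis/Mazur argument, but the two are interchangeable, and your final appeal to the fundamental theorem of $\Gamma$-convergence corresponds to the paper's use of \cite[Theorem 7.4]{dal2012introduction} (the sign flip for maximization is handled the same way). The genuine gap is in your step (iii), which you correctly identify as the crux but do not actually prove. Observing that it ``suffices to show $\Gamma\text{-}\liminf = \Gamma\text{-}\limsup$'' merely restates the goal, and the argument you sketch for it --- identify $J_{\delta}(\zeta(\cdot))$ with a long-run Ces\`aro average via an Abel--Tauber identity and Lemma \ref{limz} --- cannot be carried out as stated. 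First, it conflates the pointwise limit $\lim_{\delta\to 0}J_{\delta}(\zeta(\cdot))$ with the $\Gamma$-limit, which is defined through infima over neighborhoods in the trajectory topology and is in general a relaxed (semicontinuous-envelope) object rather than the pointwise limit. Second, and more seriously, the long-run average of $\phi(s,z)z$ does not exist for every admissible trajectory: the paper's Appendix constructs a control whose Ces\`aro means have distinct $\liminf$ and $\limsup$, and that same lacunary oscillation (constant regimes on exponentially growing time intervals) defeats the Abel means $\delta\int_0^{\infty}e^{-\delta t}(\cdot)\,dt$ as well; Lemma \ref{limz} only controls $z(\cdot)$, not the oscillations of $s(\cdot)$, so it does not rescue the argument. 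Hence ``carrying this out'' does not produce a well-defined $J_0(\zeta(\cdot))$ for all $\zeta(\cdot)\in\mathcal{S}(\xi)$ by your route.

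The paper closes this step by a different mechanism that is absent from your proposal: it shows that for each fixed trajectory the map $\delta\mapsto J_{\delta}(\zeta(\cdot))$ is monotone for $\delta$ small enough (using two-sided bounds $0<m<\phi(s,z)z<M$ on $\mathcal{L}(\xi)$), and then invokes the standard result that a monotone, equi-bounded family of continuous functionals $\Gamma$-converges (\cite[Proposition 5.7]{dal2012introduction}). That monotonicity-plus-continuity device is the missing idea; without it, or some substitute that establishes existence of the $\Gamma$-limit for every trajectory and not only those with convergent time-averages, your proof of the first assertion of the proposition is incomplete, and everything downstream (the identification of $V_0$ and the convergence of maximizers) rests on it.
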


\begin{proof}
First, we show that for all trajectories $\zeta(\cdot) = \big(s(t),z(t)\big) \in{\cal S}(\xi)$ and for $\delta$ small enough, $\delta \mapsto J_{\delta}(\zeta(\cdot))$ is increasing. We can write this function as
\[
	J_{\delta}(\zeta(\cdot)) = \delta \int_0^{+\infty} e^{-\delta t} g(t) dt,
\]
with $g(t) :=\phi \big(s(t),z(t)\big) z(t)$, which is bounded and positive (Lemma \ref{lemcompinv}),
\[
0< m < g(t) < M < \infty, \quad \forall t\geqslant 0.
\]
Then, we have
\[
	\frac{\partial}{\partial \delta} J_{\delta}(\zeta(\cdot)) = \int_0^{+\infty} e^{-\delta t} g(t) dt - \delta \int_0^{+\infty} te^{-\delta t} g(t) dt
\]
so that for $T>0$,
\[
	\frac{\partial}{\partial \delta} J_{\delta}(\zeta(\cdot)) > \frac{m}{\delta} \left( 1 - e^{-\delta T} \right) - \frac{M}{\delta}.
\]
Now, since $e^{-\delta T} = 1 - \delta T + o(\delta)$, there exists $\bar \delta >0$ such that, for all $\delta < \bar \delta$, $\frac{m}{\delta} \big( 1 - e^{-\delta T})> \frac{mT}{2}$.
Then, taking $T> \frac{2M }{m \delta}$, we conclude that $\delta \mapsto J_{\delta}(\zeta(\cdot))$ is increasing for $\delta < \bar \delta$.

Next, recall that for all initial conditions and all trajectories, $\delta \mapsto J_{\delta}(\zeta(\cdot))$ is uniformly bounded (Lemma \ref{finterewards}).
Finally, since $J_{\delta}(\cdot)$ is continuous with respect to $\zeta(\cdot)$, we can use \cite[Proposition 5.7]{dal2012introduction} to get the $\Gamma-$convergence as $\delta$ goes to 0.

To show that the suprema are attained and that they converge, it is sufficient to show that there exists a countably compact set on which the suprema are attained for all $\delta$ \cite[Theorem 7.4]{dal2012introduction}.
The set $\mathcal{S}(\xi)$ is clearly independent of $\delta$
so that we now need to show that $\mathcal{S}(\xi)$ is countably compact for the topology on $W^{1,1}\big(0,\infty;\R^2, e^{-bt}dt\big)$ given in Definition \ref{def:topoWeightedW11}. However, since $W^{1,1}\big(0,\infty;\R^2, e^{-bt}dt\big)$ is a metric space, any compact set is countably compact, so we only need to prove the compactness of $\mathcal{S}(\xi)$.

For each $\xi \in \cal D$ we set
\[
 F_{\xi}(\zeta) := F \big( P_{\cal L(\xi)} (\zeta) \big)
\]
where $P_{\cal L(\xi)}$ is the projection on the convex set $\cal L(\xi)$.
Then $F_{\xi}$ has linear growth, so that we can define
\[
 c  = \sup_{\zeta \in \text{Dom}(F_{\xi})} \frac{||F_{\xi}(\zeta)||}{||\zeta||+1}
\]
where $||F_{\xi}(\zeta)|| := \sup_{\eta \in F_{\xi}(\zeta)} ||\eta||$.
Note that $F$ is upper semi-continuous and has compact non-empty convex images
(such a map is known as a Marchaud map \cite{aubin2009viability}).
With this, the set $\mathcal{S}(\xi)$ is the set of absolutely continuous solutions of the differential inclusion
\[
 \dot \zeta(t) \in F_{\xi}(\zeta(t)), \qquad \zeta(0)=\xi.
\]
We can therefore use \cite[Theorem 3.5.2]{aubin2009viability} to establish that $\mathcal{S}(\xi)$ is compact in $W^{1,1} \big(0,\infty;\R^2, e^{-bt}dt\big)$ for $b>c$.

\end{proof}

We now relate the average and discounted biogas production problems.

\begin{proposition} \label{prop:grune}
For all $\xi \in \cal D$ we have
 \[
    \underline V^{\infty}(\xi) \leqslant  V_{0}(\xi) \leqslant \overline V^{\infty}(\xi).
 \]
\end{proposition}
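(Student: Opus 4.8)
The plan is to reduce the statement to a per-trajectory Abelian--Tauberian comparison between the Cesàro averages $J^T$ and the Abel (discounted) averages $J_\delta$, and then to pass to the suprema. The starting observation is that, by the monotonicity established in the proof of Proposition \ref{prop:limVdelta} (the map $\delta \mapsto J_\delta(\zeta(\cdot))$ is increasing for $\delta < \bar\delta$) together with the uniform bound of Lemma \ref{finterewards}, the pointwise limit
\[
 J_*(\zeta(\cdot)) := \lim_{\delta \to 0} J_\delta(\zeta(\cdot))
\]
exists for every $\zeta(\cdot) \in \mathcal{S}(\xi)$. The classical relation between Abel and Cesàro means (see \cite{grune1998relation} and \cite[Lemma 3.5]{grune1998asymptotic}), applied to the bounded and positive running reward $g(t) = \phi(s(t),z(t))z(t)$ along a \emph{fixed} trajectory, yields the two-sided sandwich $\underline J^\infty(\xi,u(\cdot)) \leqslant \liminf_{\delta \to 0} J_\delta(\zeta(\cdot))$ and $\limsup_{\delta \to 0} J_\delta(\zeta(\cdot)) \leqslant \overline J^\infty(\xi,u(\cdot))$. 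Since the pointwise limit exists, this reads $\underline J^\infty(\xi,u(\cdot)) \leqslant J_*(\zeta(\cdot)) \leqslant \overline J^\infty(\xi,u(\cdot))$ for every admissible control.

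For the lower bound I would fix an arbitrary control $u(\cdot)$ with trajectory $\zeta(\cdot)$ and combine the left inequality with $J_\delta(\zeta(\cdot)) \leqslant V_\delta(\xi)$ and the convergence $V_\delta(\xi) \to V_0(\xi)$ from Proposition \ref{prop:limVdelta}:
\[
 \underline J^\infty(\xi,u(\cdot)) \leqslant J_*(\zeta(\cdot)) = \lim_{\delta\to0} J_\delta(\zeta(\cdot)) \leqslant \lim_{\delta\to0} V_\delta(\xi) = V_0(\xi).
\]
Taking the supremum over all admissible $u(\cdot)$ gives $\underline V^\infty(\xi) \leqslant V_0(\xi)$.

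For the upper bound, the difficulty is that one cannot simply interchange the supremum over controls with the limit in $\delta$ (equivalently, with the limit $T \to \infty$ in the Cesàro average), since near-optimal controls $u_\delta$ depend on $\delta$ and their tail behaviour is not controlled by convergence on compact intervals. This is where the $\Gamma$-convergence framework of Proposition \ref{prop:limVdelta} is essential. I would use the general fact that the $\Gamma$-lower limit never exceeds the pointwise lower limit, i.e. $J_0(\zeta(\cdot)) \leqslant \liminf_{\delta\to0}J_\delta(\zeta(\cdot)) = J_*(\zeta(\cdot))$ for every $\zeta(\cdot) \in \mathcal{S}(\xi)$. Extracting from the compact set $\mathcal{S}(\xi)$ a limit $\zeta_0(\cdot)$ of optimal trajectories $\zeta_\delta(\cdot)$, Proposition \ref{prop:limVdelta} gives $V_0(\xi) = J_0(\zeta_0(\cdot))$, and the chain
\[
 V_0(\xi) = J_0(\zeta_0(\cdot)) \leqslant J_*(\zeta_0(\cdot)) \leqslant \overline J^\infty(\xi,u_0(\cdot)) \leqslant \overline V^\infty(\xi),
\]
with $u_0(\cdot)$ any admissible control generating $\zeta_0(\cdot)$, closes the argument; here the second inequality is the $\Gamma$-limit bound, the third is the right-hand side of the per-trajectory sandwich applied to $\zeta_0(\cdot)$, and the last is the definition of the supremum.

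The step I expect to be the main obstacle is precisely this upper bound: it is the one that genuinely exploits the machinery of Proposition \ref{prop:limVdelta}, namely the compactness of $\mathcal{S}(\xi)$ in the weak topology of Definition \ref{def:topoWeightedW11}, the preservation of the value in the limit, and the control of the tail of the average reward through the inequality $J_0 \leqslant J_*$. By contrast, the remaining ingredients --- the boundedness and positivity of $g$ and the Abel--Cesàro sandwich for a fixed bounded integrand --- are routine.
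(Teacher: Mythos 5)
Your proof is correct, but it follows a genuinely different route from the paper's. The paper's argument is a direct appeal to the uniform Tauberian results of Gr\"une \cite{grune1998relation}: Lemma 3.3 there identifies $\sup_u\liminf_{T}J^T$ with $\lim_{T}\sup_u\inf_{\tau\geqslant T}J^{\tau}$, and Corollary 3.5 bounds this uniform value by $V_\delta$ for all small $\delta$, after which one passes to the limit (the inequality $V_0\leqslant\overline V^\infty$ being ``obtained similarly''). You instead assemble the statement from two ingredients already at hand: the elementary per-trajectory Abel--Ces\`aro sandwich for a bounded positive integrand, and the $\Gamma$-convergence package of Proposition \ref{prop:limVdelta}. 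Your treatment of the two inequalities is asymmetric in exactly the right way: the lower bound needs only $\sup\liminf\leqslant\liminf\sup$ together with $\lim_\delta V_\delta=V_0$, whereas the upper bound is where a sup/limit interchange is genuinely required, and you obtain it from the attainment of $\max_{\zeta}J_0(\zeta)$ combined with the general fact that the $\Gamma$-lower limit is dominated by the pointwise lower limit (immediate from the paper's Definition \ref{def:gammalim}, since $\zeta(\cdot)$ belongs to each of its neighborhoods). One small simplification is available: you need not extract a convergent subsequence of optimal trajectories $\zeta_\delta(\cdot)$, which would require sequential compactness of $\mathcal{S}(\xi)$ in the topology of Definition \ref{def:topoWeightedW11}; Proposition \ref{prop:limVdelta} already asserts that the maximum of $J_0$ is attained and equals $\lim_\delta V_\delta(\xi)$, so any maximizer $\zeta_0(\cdot)$ of $J_0$ serves in your final chain. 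In terms of trade-offs, the paper's proof is shorter but outsources all the analytic content to \cite{grune1998relation}, while yours is self-contained modulo Proposition \ref{prop:limVdelta} and a classical Abelian inequality, and it makes visible precisely where the difficulty sits.
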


\begin{proof}
We adapt here results of \cite{grune1998relation} given for minimization problems to maximization problems by changing the sign of the reward.
We give here the main steps for the first inequality, the second is obtained similarly.
First, \cite[Lemma 3.3]{grune1998relation} gives
 \[
    \sup_{u(\cdot)} \liminf_{T\rightarrow \infty} J^T(\xi , u(\cdot)) = \lim_{T\rightarrow \infty} \sup_{u(\cdot)} \inf_{\tau \geqslant T} J^{\tau}(\xi, u(\cdot))
 \]
and then  \cite[Corollary 3.5]{grune1998relation} states that for all $T>0$, all $\varepsilon > 0$ we have
 \[
  \sup_{u(\cdot)} \inf_{\tau \geqslant T} J^{\tau}(\xi, u(\cdot)) - \varepsilon \leqslant  V_{\delta} (\xi)
 \]
for all small $\delta$.
Taking the limit as $T \rightarrow  \infty$ and $\delta \rightarrow 0 $ gives the result.

\end{proof}

\subsection{Solution of Optimal Control Problems}

We now solve the optimal control problems \eqref{Vinfty-liminf} and \eqref{Vinfty-limsup} and show that their value functions are equal to the limit \eqref{def:V0} of the discounted problem.
We start by determining an upper bound for the value functions and then we will exhibit controls that attain this bound.

\begin{proposition} \label{prop:majV}
For all initial conditions $\xi \in \cal D$
 \[
   \overline V^{\infty}(\xi)  \leqslant \max_{s\in (0,s_{in})} \phi(s,1).
 \]
\end{proposition}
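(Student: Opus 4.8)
The plan is to bound the superior-limit average reward $\overline J^{\infty}(\xi,u(\cdot))$ uniformly over all admissible controls by $\Phi := \max_{s\in(0,s_{in})}\phi(s,1) = \overline\phi(1)$, and then take the supremum over $u(\cdot)$. I would fix an admissible control, write $(s(\cdot),z(\cdot)) = (s_{\xi,u}(\cdot),z_{\xi,u}(\cdot))$, and set $g(s,z):=\phi(s,z)z$ for the running reward. Note $g$ is continuous, hence uniformly continuous and bounded on the compact set ${\cal L}(\xi)$ (as already used in Lemma \ref{finterewards}). First I would record the elementary fact that $\phi(s,1)\leqslant\Phi$ for every $s\in[0,s_{in})$: this is the definition of $\Phi$ on the open interval, while $\phi(0,1)=\mu(0,s_{in})\,s_{in}=0\leqslant\Phi$ handles the endpoint $s=0$.

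The argument then splits along the dichotomy provided by Lemma \ref{limz}, which is exhaustive since $t\mapsto\int_0^t u(\tau)\,d\tau$ is nondecreasing in $t$. If $u(\cdot)$ is \emph{not} persistently exciting, then $s(t)\to 0$; since $\phi(0,z)=0$ for all $z$ (Assumption \ref{assum:growthfct}) and $z(\cdot)$ stays in the compact interval $[\min(z_0,1),\max(z_0,1)]$ (Lemma \ref{lemcompinv}), uniform continuity of $g$ gives $g(s(t),z(t))\to 0$, whence $J^T\to 0\leqslant\Phi$ and a fortiori $\overline J^{\infty}\leqslant\Phi$.

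The substantive case is when $u(\cdot)$ is persistently exciting, so that $z(t)\to 1$ by Lemma \ref{limz}. Here I would fix $\varepsilon>0$ and use uniform continuity of $g$ on ${\cal L}(\xi)$ to pick $\eta>0$ with $|g(s,z)-g(s,1)|<\varepsilon$ whenever $|z-1|<\eta$, uniformly in $s$. Since $z(t)\to 1$, there is $t_\varepsilon$ with $|z(t)-1|<\eta$ for $t\geqslant t_\varepsilon$, and then
\[
 g(s(t),z(t)) \leqslant g(s(t),1)+\varepsilon = \phi(s(t),1)+\varepsilon \leqslant \Phi+\varepsilon, \qquad t\geqslant t_\varepsilon .
\]
Splitting $\frac1T\int_0^T g = \frac1T\int_0^{t_\varepsilon} g + \frac1T\int_{t_\varepsilon}^T g$, the first term is of order $t_\varepsilon/T\to 0$ by boundedness of $g$, and the second is at most $\Phi+\varepsilon$; taking $T\to\infty$ gives $\overline J^{\infty}(\xi,u(\cdot))\leqslant\Phi+\varepsilon$, and letting $\varepsilon\to 0$ yields $\overline J^{\infty}(\xi,u(\cdot))\leqslant\Phi$. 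As this bound is uniform in $u(\cdot)$, taking the supremum proves $\overline V^{\infty}(\xi)\leqslant\Phi$.

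I expect the only delicate point to be the regime $z(t)>1$. For $z\leqslant 1$ one gets the bound directly from monotonicity, since $x\mapsto\mu(s,x)x$ nondecreasing gives $g(s,z)=\mu\big(s,(s_{in}-s)z\big)(s_{in}-s)z\leqslant g(s,1)=\phi(s,1)$; but for $z>1$ this inequality goes the wrong way, so the convergence $z(t)\to 1$ together with the uniform-continuity estimate is genuinely needed rather than a one-line monotonicity bound. This is precisely where Lemma \ref{limz} does the essential work.
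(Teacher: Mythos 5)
Your proof is correct, but it follows a genuinely different decomposition from the paper's. The paper splits on the sign of $z_0-1$: for $z_0\leqslant 1$ it uses only the monotonicity of $x\mapsto\mu(s,x)x$ (inequality \eqref{ineqphiz}) to get the pointwise bound $\phi(s,z)z\leqslant\phi(s,1)\leqslant\overline\phi(1)$ with no asymptotics at all, and for $z_0\geqslant 1$ it bounds $\phi(s,z)z\leqslant\phi(s,1)z$ via \eqref{ineqphi} and then invokes the Ces\`aro limit $\frac1T\int_0^T z\to 1$ from Lemma \ref{limz}. You instead split on persistent excitation: in the non--persistently-exciting case you use $s(t)\to 0$ to kill the reward, and in the persistently exciting case you use the pointwise convergence $z(t)\to 1$ together with uniform continuity of $(s,z)\mapsto\phi(s,z)z$ on ${\cal L}(\xi)$, rather than the structural monotonicity of Assumption \ref{assum:growthfct}. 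Your diagnosis of where the one-line monotonicity bound fails (the regime $z>1$) is exactly right and matches where the paper has to call on Lemma \ref{limz}. One point in your favour: as written, the paper's $z_0\geqslant 1$ case quotes the Ces\`aro limit, which Lemma \ref{limz} only asserts for persistently exciting controls, so the non--persistently-exciting subcase is left implicit there; your explicit dichotomy closes that small gap, at the modest cost of trading the algebraic monotonicity estimate for a uniform-continuity argument. Both routes are valid and of comparable length.
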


\begin{proof}
With the monotonicity properties of $\mu(\cdot,\cdot)$ of Assumption \ref{assum:growthfct}, we have that $z\mapsto \phi(s,z)$ is non increasing and $z\mapsto \phi(s,z)z$ is non decreasing.
This implies that
\begin{align} \label{ineqphi}
 \phi (s, \max(z_0,1)) & \leqslant  \phi (s,z) \leqslant  \phi (s, \min(z_0,1))
\end{align}
and
\begin{align} \label{ineqphiz}
 \phi (s, \min(z_0,1))  \min(z_0,1) \leqslant  \phi (s,z) z \leqslant  \phi (s, \max(z_0,1)) \max(z_0,1).
\end{align}
First, we consider the case when $z_0 \leqslant 1$. For any control $u(\cdot)$, we have
\begin{align*}
 J^T(\xi, u(\cdot)) & \leqslant \frac{1}{T} \int_0^T \phi \big(s(t), \max(z_0,1)\big) \max(z_0,1) \, dt \\
 & \leqslant \max_{s\in (0,s_{in})} \phi(s,1) = \overline \phi(1).
\end{align*}
Taking the upper limit as $T$ goes to infinity and the supremum with respect to $u(\cdot)$ we get the result.

Next, for $z_0 \geqslant 1$, we have
\begin{align*}
 J^T(\xi, u(\cdot)) & \leqslant \frac{1}{T} \int_0^T \phi \big(s(t), \min(z_0,1)\big) z(t) \, dt \\
 & \leqslant \max_{s\in (0,s_{in})} \phi(s,1)  \frac{1}{T} \int_0^T z(t) \, dt.
\end{align*}
Using Lemma \ref{limz} we get that $\overline J^\infty (\xi, u(\cdot)) \leqslant \overline \phi(1)$ and we conclude taking the supremum with respect to $u(\cdot)$.

\end{proof}

Note that the existence of a maximum of $s \mapsto \phi(s,1)= \mu(s,s_{in}-s)(s_{in}-s)$ on $(0,s_{in})$ follows from Assumption \ref{assum:growthfct}.
We will denote a substrate level at which such a maximum is attained as
\begin{equation*}
 \bar s  = \argmax_{s\in (0,s_{in})} \phi(s,1)
\end{equation*}

\begin{proposition} \label{prop:asymopt}
 For any initial condition $\xi \in \cal D$, any control $\overline u(\cdot) \in {\cal U}(0,\infty)$ that drives the system asymptotically to the state $(\bar s, 1)$ is optimal for problems \eqref{Vinfty-liminf}, \eqref{Vinfty-limsup} and \eqref{def:V0}.
 We then have
 \begin{equation} \label{eq:ValfctEqual}
  \underline V^{\infty}(\xi) =  V_{0}(\xi) = \overline V^{\infty}(\xi) = \phi(\bar s,1) = \overline \phi(1).
 \end{equation}
\end{proposition}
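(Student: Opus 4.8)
The plan is to squeeze all the value functions between $\overline\phi(1)$ and itself, pairing the upper bound already furnished by Proposition \ref{prop:majV} with a matching lower bound produced by the trajectory converging to $(\bar s,1)$. First I would record that such controls exist: the most rapid approach feedback $\psi_{\bar s}$ of \eqref{def:MRAPfeedback} brings $s$ to the level $\bar s\in(0,s_{in})$ in finite time by Lemma \ref{lem:sStarReachable}, and once $s\equiv\bar s$ the feedback keeps $u$ strictly positive, so the resulting control is persistently exciting and Lemma \ref{limz} forces $z_{\xi,\overline u}(t)\to 1$. Hence at least one admissible $\overline u$ drives the system to $(\bar s,1)$, and the statement concerns any such control.

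Next I would fix any $\overline u$ with $s_{\xi,\overline u}(t)\to\bar s$ and $z_{\xi,\overline u}(t)\to 1$, and set $g(t)=\phi\big(s_{\xi,\overline u}(t),z_{\xi,\overline u}(t)\big)\,z_{\xi,\overline u}(t)$. By continuity of $\phi$ we get $g(t)\to\phi(\bar s,1)=\overline\phi(1)$, while $g$ is bounded as in Lemma \ref{finterewards}. The Ces\`aro average of a bounded convergent integrand tends to the same limit; carrying this out exactly as in the final part of the proof of Lemma \ref{limz} (splitting $\tfrac1T\int_0^T$ at a time past which $g$ is within $\varepsilon$ of $\overline\phi(1)$) gives $\lim_{T\to\infty}J^T(\xi,\overline u)=\overline\phi(1)$, hence $\underline J^{\infty}(\xi,\overline u)=\overline J^{\infty}(\xi,\overline u)=\overline\phi(1)$.

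Then the loop closes by the squeeze. Since $\underline J^{\infty}(\xi,\overline u)\leqslant\underline V^{\infty}(\xi)$ by definition of the supremum, Proposition \ref{prop:grune} gives $\underline V^{\infty}(\xi)\leqslant V_0(\xi)\leqslant\overline V^{\infty}(\xi)$, and Proposition \ref{prop:majV} gives $\overline V^{\infty}(\xi)\leqslant\overline\phi(1)$, the chain
\[
\overline\phi(1)=\underline J^{\infty}(\xi,\overline u)\leqslant\underline V^{\infty}(\xi)\leqslant V_0(\xi)\leqslant\overline V^{\infty}(\xi)\leqslant\overline\phi(1)
\]
collapses to equalities. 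This proves \eqref{eq:ValfctEqual} and, at the same time, shows $\overline u$ attains $\underline V^{\infty}(\xi)$ and $\overline V^{\infty}(\xi)$, i.e. it is optimal for \eqref{Vinfty-liminf} and \eqref{Vinfty-limsup}.

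For the discounted limit problem \eqref{def:V0} I would first note the Abelian analogue of the second step: because $g(t)\to\overline\phi(1)$, the rescaled discounted reward $J_\delta(\xi,\overline u)=\delta\int_0^\infty e^{-\delta t}g(t)\,dt$ tends to $\overline\phi(1)$ as $\delta\to 0$, so $\lim_{\delta\to0}J_\delta(\xi,\overline u)=\overline\phi(1)=V_0(\xi)=\lim_{\delta\to0}V_\delta(\xi)$ and $\overline u$ is asymptotically optimal for the discounted family. The hard part will be promoting this to $J_0(\zeta_0)=V_0(\xi)$ for the $\Gamma$-limit itself, since $J_0(\zeta_0)$ need not equal the pointwise limit $\lim_\delta J_\delta(\zeta_0)$: the $\Gamma$-limit of the monotone family is a relaxed, lower-semicontinuous object, and trajectories that shadow $\zeta_0$ on compact intervals but later leave a neighborhood of $(\bar s,1)$ carry a strictly smaller limiting reward, so a priori $J_0(\zeta_0)\leqslant\overline\phi(1)$ only. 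I would resolve this through the last assertion of Proposition \ref{prop:limVdelta}: take optimal discounted trajectories $\zeta_\delta$, extract a convergent subsequence $\zeta_\delta\to\zeta_0\in\mathcal{S}(\xi)$ using the compactness established there, whence $J_0(\zeta_0)=\lim_\delta J_\delta(\zeta_\delta)=\lim_\delta V_\delta(\xi)=V_0(\xi)$; such a limit $\zeta_0$ must itself drive the system to $(\bar s,1)$, for otherwise its limiting reward would fall strictly below $\overline\phi(1)$ by the estimates above, contradicting optimality. This identifies the $\Gamma$-optimal synthesis with the most rapid approach to $(\bar s,1)$ and reconciles it with the whole family of controls described in the statement.
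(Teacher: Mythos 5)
Your argument for the two average-reward problems is exactly the paper's: set $g(t)=\phi\big(s_{\xi,\overline u}(t),z_{\xi,\overline u}(t)\big)z_{\xi,\overline u}(t)$, note $g(t)\to\phi(\bar s,1)$ by continuity and boundedness, pass to the Ces\`aro and Abel means by splitting the integral at a time beyond which $g$ is $\varepsilon$-close to its limit, and close the squeeze with Propositions \ref{prop:grune} and \ref{prop:majV}. The preliminary remark that such controls exist (via $\psi_{\bar s}$, Lemma \ref{lem:sStarReachable} and Lemma \ref{limz}) is a harmless addition not in the paper.

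Where you diverge is the treatment of problem \eqref{def:V0}, and there your detour creates a gap rather than closing one. The worry that the $\Gamma$-limit $J_0(\zeta_{\overline u})$ might differ from the pointwise limit $\lim_{\delta\to0}J_\delta(\zeta_{\overline u})$ is already settled inside Proposition \ref{prop:limVdelta}: the family $\delta\mapsto J_\delta(\zeta(\cdot))$ is increasing for small $\delta$ and each $J_\delta$ is continuous, and the cited result of Dal Maso for such monotone families identifies the $\Gamma$-limit with the pointwise limit. That is precisely how the paper passes from $\lim_\delta J_\delta(\xi,\overline u)=\phi(\bar s,1)$ to $J_0(\xi,\overline u)=\phi(\bar s,1)$ and then concludes with Proposition \ref{prop:grune}. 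Your substitute argument --- extract a convergent subsequence of optimal discounted trajectories $\zeta_\delta\to\zeta_0$ and argue that this $\zeta_0$ must converge to $(\bar s,1)$ --- proves the wrong implication: at best it exhibits \emph{one} trajectory optimal for $J_0$, whereas the proposition asserts that \emph{every} control driving the state to $(\bar s,1)$, in particular your arbitrary $\overline u$, is optimal. Moreover the step ``otherwise its limiting reward would fall strictly below $\overline\phi(1)$'' is not justified: a trajectory can fail to converge to $(\bar s,1)$ and still realize the value $\overline\phi(1)$ by spending an asymptotic fraction one of its time near that state (only the time-averages matter in the limit). Replace your last paragraph by the one-line observation that $J_0=\lim_\delta J_\delta$ pointwise, by monotone $\Gamma$-convergence, and your proof coincides with the paper's.
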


\begin{proof}
The continuity of $\phi$ implies that for all $\varepsilon > 0$, there exists a time $t_{\varepsilon}\geqslant0$ such that, for all $t\geqslant t_{\varepsilon}$,
\begin{equation} \label{eq:contiPhi}
\left| \phi \big(s_{\xi,\bar u}(t),z_{\xi,\bar u}(t)\big)z_{\xi,\bar u}(t)-\phi \big(\bar s,1\big)\right|< \varepsilon.
\end{equation}
Since $s_{\xi,\bar u}(\cdot)$ and $z_{\xi,\bar u}(\cdot)$ take values in the compact set $\cal L(\xi)$ \eqref{Lxi}, there is a constant $M_{\xi}>0$ such that, for all $t\geqslant0$,
\begin{equation} \label{eq:phiBorne}
 \left| \phi \big(s_{\xi,\bar u}(t),z_{\xi,\bar u}(t)\big)z_{\xi,\bar u}(t) \right| < M_{\xi}.
 \end{equation}
Then, for all $T\geqslant t_{\varepsilon}$, from \eqref{eq:contiPhi} and \eqref{eq:phiBorne}
\begin{align*}
 \Big| J^T(\xi, \overline u (\cdot) )  -\phi \big(\bar s,1\big) \Big|
 &\leqslant  \frac{1}{T}  \int_0^{t_{\varepsilon}} \left| \phi \big(s_{\xi,\bar u}(t),z_{\xi,\bar u}(t)\big)z_{\xi,\bar u}(t)-\phi \big(\bar s,1\big) \right| \,dt \\
 & \qquad + \frac{1}{T} \int_{t_{\varepsilon}}^T \left| \phi \big(s_{\xi,\bar u}(t),z_{\xi,\bar u}(t)\big)z_{\xi,\bar u}(t)-\phi \big(\bar s,1\big)\right| \,dt   \\
 &<  \frac{2 M_{\xi} t_{\varepsilon}}{T}   +\left(1 - \frac{ t_{\varepsilon}}{T}\right) \varepsilon
\end{align*}
and we have
\[
 \underline J^{\infty}(\xi,\overline u(\cdot)) = \overline J^{\infty}(\xi,\overline u(\cdot)) = \phi(\bar s,1).
\]
Using Propositions \ref{prop:grune} and \ref{prop:majV}, we get the equality of value functions \eqref{eq:ValfctEqual} and deduce the optimality of $\overline u(\cdot)$ for both average biogas production problems \eqref{Vinfty-liminf} and \eqref{Vinfty-limsup}.
We proceed similarly to get
\begin{align*}
 \Big| J_{\delta}(\xi, \overline u (\cdot) )  -\phi \big(\bar s,1\big) \Big|
 &<   2 M_{\xi} \int_0^{t_{\varepsilon}} \delta e^{-\delta t} \,dt  + \varepsilon \int_{t_{\varepsilon}}^{\infty} \delta e^{-\delta t} \,dt   \\
 &<  2 M_{\xi} \left(1- e^{-\delta t_{\varepsilon}} \right)  - \varepsilon  e^{-\delta t_{\varepsilon}}
\end{align*}
and we have
\[
 J_{0}(\xi, \overline u (\cdot) ) = \phi(\bar s,1).
\]
Then, Proposition \ref{prop:grune} implies that $\overline u(\cdot)$ is also optimal for problem \eqref{def:V0}.

\end{proof}

% The previous proposition shows that the value functions of the considered problems are equal to the biogas flowrate $\phi(s,z)z$ for $s=\bar s$ and $z=1$ if this state can be attained.
With Lemma \ref{limz}, we know that all persistently exciting admissible controls make $z(\cdot)$ converge to 1, and from Lemma \ref{lem:sStarReachable}, we know that the feedback $ \psi_{s^*}$ defined in \eqref{def:MRAPfeedback} with $s^*= \bar s$ guarantees that $s(\cdot)$ reaches $\bar s$.
Then, from the previous Proposition we have the following result.

\begin{proposition}
 For any initial condition $\xi \in \cal D$  satisfying Assumption \ref{assum:umax}, the most rapid approach feedback to $\bar s$, defined in \eqref{def:MRAPfeedback} and denoted $\psi_{\bar s}$, is optimal for both average production problems \eqref{Vinfty-liminf} and \eqref{Vinfty-limsup} and for the limit \eqref{def:V0} of the discounted production problem.
\end{proposition}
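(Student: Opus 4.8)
The plan is to reduce everything to the already-established Proposition \ref{prop:asymopt}: it suffices to show that the closed-loop trajectory generated by the feedback $\psi_{\bar s}$ converges asymptotically to the point $(\bar s, 1)$, since that Proposition asserts that any control achieving this is optimal for \eqref{Vinfty-liminf}, \eqref{Vinfty-limsup} and \eqref{def:V0}. Existence and uniqueness of the absolutely continuous closed-loop solution is already guaranteed, under Assumption \ref{assum:umax}, by the discussion following the definition of $\psi_{s^*}$, so there is nothing to verify there.

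First I would handle the $s$-component. By Lemma \ref{lem:sStarReachable} applied with $s^* = \bar s \in (0,s_{in})$, the substrate level $\bar s$ is reached in some finite time $t_1 \geq 0$. On the singular surface $\{s = \bar s\}$ the feedback is defined precisely as $u = \mu(\bar s,(s_{in}-\bar s)z)z$, which by the first line of \eqref{eq:chemosz} yields $\dot s = 0$; hence the trajectory remains on $\{s = \bar s\}$ and $s_{\xi,\psi_{\bar s}}(t) = \bar s$ for all $t \geq t_1$. In particular $s_{\xi,\psi_{\bar s}}(t) \to \bar s$.

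The key step is then to verify that $\psi_{\bar s}$ is persistently exciting, so that Lemma \ref{limz} applies to the $z$-component. For $t \geq t_1$ the applied control equals $\mu(\bar s,(s_{in}-\bar s)z(t))z(t)$. By Lemma \ref{lemcompinv} the trajectory stays in the compact set $\mathcal{L}(\xi)$, so $z(t)$ lies in $[\min(z_0,1),\max(z_0,1)]$, an interval bounded away from $0$ because $z_0 > 0$. Since $\bar s > 0$, Assumption \ref{assum:growthfct} gives $\mu(\bar s,(s_{in}-\bar s)z) > 0$ for every such $z$, and by continuity the product $\mu(\bar s,(s_{in}-\bar s)z)z$ attains a positive minimum $c > 0$ over this compact range. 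Therefore $\int_0^T u(t)\,dt \geq c\,(T - t_1) \to \infty$ as $T \to \infty$, which is exactly the persistent excitation condition. Invoking Lemma \ref{limz} then yields $z_{\xi,\psi_{\bar s}}(t) \to 1$.

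Combining the two components, the closed-loop trajectory satisfies $\big(s_{\xi,\psi_{\bar s}}(t),z_{\xi,\psi_{\bar s}}(t)\big) \to (\bar s,1)$, so $\psi_{\bar s}$ drives the system asymptotically to $(\bar s,1)$ and Proposition \ref{prop:asymopt} gives optimality for all three problems. The only genuinely delicate point is the persistent-excitation verification, and even that is mild: it hinges solely on the uniform positive lower bound for the singular control, which follows from compactness of $\mathcal{L}(\xi)$ together with $z_0 > 0$ and $\bar s > 0$. Notably, no time-horizon-dependent estimates enter anywhere, which is consistent with the time-independent character of the feedback $\psi_{\bar s}$.
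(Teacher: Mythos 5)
Your proof is correct and follows essentially the same route as the paper, which justifies this proposition by combining Lemma \ref{lem:sStarReachable} (the substrate reaches $\bar s$ in finite time), Lemma \ref{limz} ($z\to 1$ for persistently exciting controls) and Proposition \ref{prop:asymopt}. The only difference is that you explicitly verify the persistent excitation of the closed-loop control via the positive lower bound on the singular control $\mu(\bar s,(s_{in}-\bar s)z)z$ over the compact range of $z$, a detail the paper leaves implicit.
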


Clearly, there is not a unique optimal control for the infinite horizon problems that we have considered.
For example, in the case of a growth function that depends only on the substrate and that is monotone (such as the Monod growth function), the constant control $u=\mu(\bar s)$ can also drive the system to the state $(\bar s, 1)$.
Nonetheless, for the control $\psi_{\bar s}$, we are able to state in the next section an estimation of the sub-optimality for the finite horizon problem.

% our interest in the feedback $\psi_{\bar s}$ lies in its simplicity, in particular in the implementation of such a control.

%%%%%%%%%%%%%%%%%%%%%%%%%%%%%%%%%%%%%%%%%%%%%%%%%%%%%%%%%%%%%%%%%%%%%%%%%%%%%%%%%%%%%%%%%%%%%%%

\section{Finite Horizon and Sub-optimal Controls}
\label{secfinite}

We now examine the problem of maximizing biogas production over a finite horizon for a time interval $[t_0,T]$ where $T$ is fixed.
For this we consider the following reward
\begin{equation} \label{eq:finitehorizonreward}
J(t_{0},\xi,u(\cdot))=\int_{t_{0}}^T \phi \big( s_{t_0,\xi,u}(t),z_{t_0,\xi,u}(t) \big) z_{t_0,\xi,u}(t) \,dt
\end{equation}
where we recall that $\big( s_{t_0,\xi,u}(\cdot), z_{t_0,\xi,u}(\cdot) \big) $ is the solution of \eqref{eq:chemosz} with control $u(\cdot) \in {\cal U}(t_0,T)$ and initial condition $\xi \in {\cal D}$.
The optimal control problem consists in maximizing this functional with respect to the dilution rate, so that the associated value function is
\begin{equation} \label{eq:VleOrig-finithorizon}
 V(t_{0},\xi) =   \sup \Big\{  J(t_{0},\xi,u(\cdot)) : u(\cdot) \in {\cal U}(t_0,T)  \Big\}.
\end{equation}

We also consider {\em auxiliary} optimal control problems, which consist in maximizing the cost, for a given $z_1\in[ \min(z_0,1) , \max(z_0, 1) ]$,
\begin{align} \label{def:aux-reward}
 J_{z_1}(t_{0},\xi,u(\cdot)) & =  \int_{t_{0}}^T \phi(s_{t_0,\xi,u}(t),z_1)\,dt
\end{align}
for the same dynamics \eqref{eq:chemosz}.
The value functions of these auxiliary problems are then defined as
\begin{align}
\label{eq:vfaux}
W_{z_1}(t_{0},\xi) & = \sup \Big\{  J_{z_1}(t_{0},\xi,u(\cdot)) : u(\cdot) \in {\cal U}(t_0,T)  \Big\}.
\end{align}
The resolution of these auxiliary problems will be presented in Section \ref{section:solveaux}.

We now show that the value functions of the original problem \eqref{eq:VleOrig-finithorizon} and the auxiliary problems \eqref{eq:vfaux} are related.

\begin{proposition} \label{prop:vleFctFrame}
For all $\xi \in \cal D$, $t_0<T$ and any $z_1 \in [ \min(z_0,1) , \max(z_0, 1) ]$, we have the following frame for the value function $V$ of the original problem
\begin{equation}
 \label{eq:vleFctFrame}
 \min(z_{0},1)  W_{z_1}(t_{0},\xi) \leqslant V(t_{0},\xi) \leqslant \max(z_{0},1) W_{z_1}(t_{0},\xi).
\end{equation}
\end{proposition}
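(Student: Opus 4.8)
The plan is to reduce the two-sided estimate to a pointwise comparison of integrands. The key observation is that for a \emph{fixed} admissible control $u(\cdot)$ the trajectory $\big(s_{t_0,\xi,u}(\cdot),z_{t_0,\xi,u}(\cdot)\big)$ entering the original reward $J$ is literally the same as the one entering the auxiliary reward $J_{z_1}$: both problems share the dynamics \eqref{eq:chemosz}, the control, and the initial condition $\xi$. The only difference is that $J$ integrates $\phi(s,z)z$ whereas $J_{z_1}$ integrates $\phi(s,z_1)$ along the \emph{same} $s(\cdot)$. Hence the whole proposition will follow once I sandwich $\phi(s,z)z$ between $\min(z_0,1)\,\phi(s,z_1)$ and $\max(z_0,1)\,\phi(s,z_1)$ at each time $t$, and then pass to suprema.

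First I would confine everything to the interval $[\min(z_0,1),\max(z_0,1)]$: by Lemma \ref{lemcompinv} the trajectory satisfies $z_{t_0,\xi,u}(t)\in[\min(z_0,1),\max(z_0,1)]$ for all $t$, and by hypothesis $z_1$ lies in the same interval. Then I would use the two monotonicity properties of Assumption \ref{assum:growthfct}, namely that $z\mapsto\phi(s,z)$ is non increasing and $z\mapsto\phi(s,z)z$ is non decreasing (the same facts already used to derive \eqref{ineqphi}--\eqref{ineqphiz}). For the upper bound, $z(t)\leqslant\max(z_0,1)$ and monotonicity of $z\mapsto\phi(s,z)z$ give $\phi(s,z)z\leqslant\phi(s,\max(z_0,1))\max(z_0,1)$, while $z_1\leqslant\max(z_0,1)$ and monotonicity of $\phi$ give $\phi(s,\max(z_0,1))\leqslant\phi(s,z_1)$; combining yields $\phi(s,z)z\leqslant\max(z_0,1)\,\phi(s,z_1)$. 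The lower bound is symmetric, using $z(t)\geqslant\min(z_0,1)$ and $z_1\geqslant\min(z_0,1)$, and gives $\phi(s,z)z\geqslant\min(z_0,1)\,\phi(s,z_1)$.

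Integrating these pointwise inequalities over $[t_0,T]$ produces, for every admissible $u(\cdot)$,
\[
\min(z_0,1)\, J_{z_1}(t_0,\xi,u(\cdot)) \leqslant J(t_0,\xi,u(\cdot)) \leqslant \max(z_0,1)\, J_{z_1}(t_0,\xi,u(\cdot)).
\]
To conclude I pass to suprema, the only delicate point being the order in which this is done. For the right inequality I bound $J_{z_1}(t_0,\xi,u(\cdot))\leqslant W_{z_1}(t_0,\xi)$ first and then take the supremum over $u(\cdot)$ of the left-hand side, obtaining $V(t_0,\xi)\leqslant\max(z_0,1)W_{z_1}(t_0,\xi)$. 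For the left inequality I use $J(t_0,\xi,u(\cdot))\leqslant V(t_0,\xi)$ to get $\min(z_0,1)\,J_{z_1}(t_0,\xi,u(\cdot))\leqslant V(t_0,\xi)$ for every $u(\cdot)$, and then take the supremum over $u(\cdot)$ on the left; since $\min(z_0,1)>0$ on ${\cal D}$ the positive constant pulls through the supremum, giving $\min(z_0,1)W_{z_1}(t_0,\xi)\leqslant V(t_0,\xi)$.

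The estimate itself is elementary; the real content lies in the framing construction rather than the calculation. The step I would watch most carefully is precisely this commutation of the positive constants $\min(z_0,1)$ and $\max(z_0,1)$ with the suprema: the optimizer of the auxiliary problem need not coincide with that of the original problem, which is exactly why the argument must be phrased ``for every fixed $u(\cdot)$'' before any supremum is taken, and why each side of the frame is obtained by taking the two suprema in the opposite order.
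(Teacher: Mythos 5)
Your proof is correct, and its global strategy coincides with the paper's: establish, for each fixed admissible control $u(\cdot)$, the control-wise inequality $\min(z_0,1)\,J_{z_1}(t_0,\xi,u(\cdot)) \leqslant J(t_0,\xi,u(\cdot)) \leqslant \max(z_0,1)\,J_{z_1}(t_0,\xi,u(\cdot))$, then pass to suprema exactly as you do. Where you genuinely differ is in how that inequality is obtained. The paper introduces the crossing time $t_1=\inf\{t\geqslant t_0 : z_{t_0,\xi,u}(t)=z_1\}\wedge T$, relies on the monotonicity in time of $z_{t_0,\xi,u}(\cdot)$ to split $[t_0,T]$ into a piece where $z(t)$ sits between $z_0$ and $z_1$ and a piece where it sits between $z_1$ and $1$, applies different pointwise bounds on each piece, and then recombines and relaxes to the stated constants, treating the cases $z_0\leqslant 1$ and $z_0\geqslant 1$ separately. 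You instead prove a single uniform pointwise estimate $\min(z_0,1)\,\phi(s,z_1)\leqslant\phi(s,z)z\leqslant\max(z_0,1)\,\phi(s,z_1)$, valid at every time, using only the invariance of $[\min(z_0,1),\max(z_0,1)]$ from Lemma \ref{lemcompinv} and the two monotonicity properties of Assumption \ref{assum:growthfct} (the chain $\phi(s,z)z\leqslant\phi(s,\max(z_0,1))\max(z_0,1)\leqslant\phi(s,z_1)\max(z_0,1)$ and its mirror). This is shorter, dispenses with the case distinction and with the time-monotonicity of $z(\cdot)$, and what it gives up is only a slightly tighter intermediate estimate that the paper discards anyway before stating \eqref{eq:vleFctFrame}. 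Your care in ordering the suprema, and the observation that $\min(z_0,1)>0$ on ${\cal D}$ so the constant passes through the supremum, match the paper's concluding step.
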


\begin{proof}
We start with the case $z_0 \leqslant 1$. For a given control $u(\cdot) \in {\cal U}(t_0,T)$, we define the following time
\[
  t_1 = \inf \left\{ t \geqslant t_0 : z_{t_0,\xi,u}(t) = z_1 \right\} \wedge T
\]
which it is well defined since $z_{t_0,\xi,u}(\cdot)$ is monotonous.
Then, for $t_0 \leqslant t \leqslant t_1$ we have $z_0 \leqslant z_{t_0,\xi,u}(t) \leqslant z_1 \leqslant 1$
and with the monotonicity properties of $\mu(\cdot,\cdot)$ of Assumption  \ref{assum:growthfct} we have
\[
 \phi(s_{t_0,\xi,u}(t),z_1) z_0 \leqslant \phi(s_{t_0,\xi,u}(t),z_{t_0,\xi,u}(t))z_{t_0,\xi,u}(t) \leqslant \phi(s_{t_0,\xi,u}(t),z_1) z_1.
\]
Next, for $t_1 \leqslant t \leqslant T$ we have $z_0 \leqslant z_1 \leqslant z_{t_0,\xi,u}(t)  \leqslant 1$
and
\[
 \phi(s_{t_0,\xi,u}(t),z_1) z_1 \leqslant \phi(s_{t_0,\xi,u}(t),z_{t_0,\xi,u}(t))z_{t_0,\xi,u}(t) \leqslant \phi(s_{t_0,\xi,u}(t),z_1).
\]
Combining these inequalities we get
\begin{align*}
 \int_{t_{0}}^{t_1} \phi(s_{t_0,\xi,u}(t),z_1) z_0 \,dt + & \int_{t_1}^T \phi(s_{t_0,\xi,u}(t),z_1)z_1 \,dt
  \leqslant J(t_0,\xi,u(\cdot) )  \\
 & \leqslant
 \int_{t_{0}}^{t_1} \phi(s_{t_0,\xi,u}(t),z_1) z_1 \,dt + \int_{t_1}^T \phi(s_{t_0,\xi,u}(t),z_1)\,dt.
\end{align*}
Now, since $z_0 \leqslant z_1 \leqslant 1$ we have
\begin{equation*}
 z_0 J_{z_1}(t_{0},\xi,u(\cdot)) \leqslant J(t_{0},\xi,u(\cdot)) \leqslant J_{z_1}(t_{0},\xi,u(\cdot)).
\end{equation*}
For the case $z_0 \geqslant 1$, we proceed in a similar way to get
\begin{equation*}
  J_{z_1}(t_{0},\xi,u(\cdot)) \leqslant J(t_{0},\xi,u(\cdot)) \leqslant z_0 J_{z_1}(t_{0},\xi,u(\cdot)).
\end{equation*}
We conclude by taking the supremum over all admissible controls.

\end{proof}

The interest of the previous frames on the value functions is that it allows to find controls for which we have an estimation of sub-optimality for the original problem.

\begin{proposition}
 For all $\xi \in \cal D$ and all $t_0<T$, any optimal control $ u^\star_{z_1}(\cdot)$ for the reward $J_{z_1}(t_0,\xi,\cdot)$ guarantees a (sub-optimal) value for the original criterion $J(t_0,\xi,\cdot)$ that satisfies
\begin{equation}
 \label{eq:subOptFrame}
\min(z_{0},1) W_{z_1}(t_{0}, \xi)  \leqslant  J(t_{0},\xi, u^\star_{z_1}(\cdot) ) \leqslant \max(z_{0},1) W_{z_1}(t_{0},\xi)
\end{equation}
and we have the following estimation of the value function $V$
\begin{equation}
 \label{eq:estSubOpt}
  V(t_0, \xi) - J(t_0, \xi, u^\star_{z_1}(\cdot)) \leqslant  |1 - z_0 | W_{z_1}(t_0, \xi).
\end{equation}
\end{proposition}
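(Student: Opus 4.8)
The plan is to treat this statement as a direct corollary of the frame established in Proposition~\ref{prop:vleFctFrame}, together with the defining property of $u^\star_{z_1}(\cdot)$: being optimal for the auxiliary problem, it satisfies $J_{z_1}(t_0,\xi,u^\star_{z_1}(\cdot)) = W_{z_1}(t_0,\xi)$. The key observation is that the proof of Proposition~\ref{prop:vleFctFrame} in fact establishes, \emph{before} passing to the supremum over controls, the pointwise-in-control inequalities
\[
\min(z_0,1)\, J_{z_1}(t_0,\xi,u(\cdot)) \leqslant J(t_0,\xi,u(\cdot)) \leqslant \max(z_0,1)\, J_{z_1}(t_0,\xi,u(\cdot))
\]
valid for every admissible $u(\cdot) \in {\cal U}(t_0,T)$ (this is precisely the chain of inequalities obtained there by distinguishing the cases $z_0 \leqslant 1$ and $z_0 \geqslant 1$). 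I would extract this control-wise frame as the first ingredient.

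For the frame \eqref{eq:subOptFrame}, I would simply specialize the inequalities above to the fixed control $u(\cdot) = u^\star_{z_1}(\cdot)$. Since this control is optimal for the auxiliary reward, $J_{z_1}(t_0,\xi,u^\star_{z_1}(\cdot)) = W_{z_1}(t_0,\xi)$, and substituting gives
\[
\min(z_0,1)\, W_{z_1}(t_0,\xi) \leqslant J(t_0,\xi,u^\star_{z_1}(\cdot)) \leqslant \max(z_0,1)\, W_{z_1}(t_0,\xi),
\]
which is exactly \eqref{eq:subOptFrame}.

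For the sub-optimality estimate \eqref{eq:estSubOpt}, I would combine two bounds: the upper bound on the true value function coming from Proposition~\ref{prop:vleFctFrame}, namely $V(t_0,\xi) \leqslant \max(z_0,1)\, W_{z_1}(t_0,\xi)$, and the lower bound on the achieved value just obtained, $J(t_0,\xi,u^\star_{z_1}(\cdot)) \geqslant \min(z_0,1)\, W_{z_1}(t_0,\xi)$. Subtracting yields
\[
V(t_0,\xi) - J(t_0,\xi,u^\star_{z_1}(\cdot)) \leqslant \big( \max(z_0,1) - \min(z_0,1) \big)\, W_{z_1}(t_0,\xi),
\]
and I would finish by invoking the elementary identity $\max(z_0,1) - \min(z_0,1) = |1 - z_0|$ to recover \eqref{eq:estSubOpt}.

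There is no genuine obstacle here, as the result is essentially a corollary of Proposition~\ref{prop:vleFctFrame}; the only point requiring mild care is to use the pointwise-in-control version of the frame (available from the body of that proof, prior to taking the supremum) rather than the frame on value functions alone, since $u^\star_{z_1}(\cdot)$ is one fixed control for which both inequalities must hold simultaneously. One should also keep in mind that the existence of an optimal $u^\star_{z_1}(\cdot)$ for the auxiliary problem is guaranteed by the explicit resolution carried out in Section~\ref{section:solveaux}, so that the statement is non-vacuous.
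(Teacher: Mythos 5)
Your proposal is correct and follows essentially the same route as the paper: it specializes the pointwise-in-control frame from the proof of Proposition~\ref{prop:vleFctFrame} to the fixed control $u^\star_{z_1}(\cdot)$ (using $J_{z_1}(t_0,\xi,u^\star_{z_1}(\cdot)) = W_{z_1}(t_0,\xi)$) to get \eqref{eq:subOptFrame}, and then combines the upper bound from \eqref{eq:vleFctFrame} with the lower bound from \eqref{eq:subOptFrame} to get \eqref{eq:estSubOpt}. The only detail worth noting is that your subtraction step implicitly uses $W_{z_1}(t_0,\xi)\geqslant 0$, which holds since $\phi\geqslant 0$.
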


\begin{proof}
From the proof of Proposition \ref{prop:vleFctFrame}, for any control $ u(\cdot) \in {\cal U}(t_0,T)$, we have
\begin{equation*}
  \min(z_0,1) J_{z_1}(t_{0},\xi,u(\cdot)) \leqslant J(t_{0},\xi,u(\cdot)) \leqslant \max(z_0,1) J_{z_1}(t_{0},\xi,u(\cdot)).
\end{equation*}
Evaluating this for any optimal control $ u^\star_{z_1}(\cdot)$ for the reward $J_{z_1}(t_0,\xi,\cdot)$ gives the sub-optimality frame \eqref{eq:subOptFrame}.
The sub-optimality estimation \eqref{eq:estSubOpt} then follows from \eqref{eq:vleFctFrame} and \eqref{eq:subOptFrame}.

\end{proof}

\subsection{ Resolution of Auxiliary Problems } \label{section:solveaux}

In order to obtain sub-optimal controls for problem \eqref{eq:VleOrig-finithorizon} we now need to solve the auxiliary problem \eqref{eq:vfaux} for a given $z_1 \in [ \min(z_0,1) , \max(z_0, 1) ]$.
The optimal control of this auxiliary problem is an autonomous feedback, even though the horizon is fixed and finite.
It is similar to the optimal feedback for the infinite horizon problem $\psi_{\bar s}$, defined in \eqref{def:MRAPfeedback}, and it drives the system towards a maximizer of $s \mapsto \phi(s,z_1)$ but now, this maximizing substrate level depends on $z_1$.

We first need an assumption on the uniqueness of a maximum of $\phi(\cdot,z_1)$.

\begin{assumption} \label{assum:maxphi-z1}
  For each $z_1 \geqslant 0 $, the function $s \mapsto \phi(s,z_1)$ admits a unique maximum on  $(0,s_{in})$, and we denote the substrate level at which this maximum is attained as
  \begin{equation} \label{eq:sbar-z1}
   \bar s(z_1)  = \argmax_{s\in (0,s_{in})} \phi(s,z_1).
  \end{equation}
\end{assumption}

Note that implies that $s \mapsto \phi(s,z_1)$ is increasing on $(0,\bar{s}(z_1)]$ and decreasing on $[\bar{s}(z_1),s_{in})$.

\begin{proposition}
 For all $\xi \in \cal D$ satisfying Assumption \ref{assum:umax} and all $t_0<T$,  the most rapid approach feedback to $\bar s(z_1)$, defined in \eqref{def:MRAPfeedback} and denoted $\psi_{\bar s(z_1)}$,  is optimal for the auxiliary problem \eqref{eq:vfaux}.
\end{proposition}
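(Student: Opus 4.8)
The plan is to recognize the auxiliary problem as a most rapid approach path (MRAP) problem for the single variable $s$. Indeed, for fixed $z_1$ the integrand $\phi\big(s_{t_0,\xi,u}(t),z_1\big)$ depends on the trajectory only through $s(\cdot)$, and by Assumption \ref{assum:maxphi-z1} the map $s\mapsto\phi(s,z_1)$ is single-peaked with maximizer $\bar s(z_1)$, strictly increasing on $(0,\bar s(z_1)]$ and decreasing on $[\bar s(z_1),s_{in})$. In particular $\phi(s,z_1)\leqslant\phi(\bar s(z_1),z_1)$ for all $s$, so the objective is maximized by keeping $s(t)$ as close to $\bar s(z_1)$ as possible for as large a fraction of $[t_0,T]$ as possible. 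Writing $m(s,z):=\mu(s,(s_{in}-s)z)z$, the first component of \eqref{eq:chemosz} reads $\dot s=(u-m(s,z))(s_{in}-s)$, which is affine and increasing in $u$ (since $s_{in}-s>0$), so at every state the achievable values of $\dot s$ form the interval with endpoints given by $u=0$, where $\dot s=-m(s,z)(s_{in}-s)\leqslant 0$, and $u=u_{\max}$, where $\dot s=(u_{\max}-m(s,z))(s_{in}-s)>0$ by Assumption \ref{assum:umax}. Thus $s$ can be steered up or down and, via the middle branch of \eqref{def:MRAPfeedback}, held at any level.

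First I would record the reachability and holding facts. Applying Lemma \ref{lem:sStarReachable} with $s^\ast=\bar s(z_1)$, the feedback $\psi_{\bar s(z_1)}$ is admissible and drives $s$ monotonically to $\bar s(z_1)$ in some finite time $\hat\tau$, after which $s$ is held at $\bar s(z_1)$; consequently the instantaneous reward equals its global maximum $\phi(\bar s(z_1),z_1)$ for all $t\geqslant\hat\tau$. Denote by $\hat s(\cdot)$ the trajectory generated by $\psi_{\bar s(z_1)}$ and by $s(\cdot)$ the trajectory of an arbitrary admissible $u(\cdot)$, both issued from $s_0$.

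The core of the proof is a pointwise domination: I would show $\phi(\hat s(t),z_1)\geqslant\phi(s(t),z_1)$ for all $t\in[t_0,T]$, so that integrating yields $J_{z_1}(t_0,\xi,\psi_{\bar s(z_1)})\geqslant J_{z_1}(t_0,\xi,u(\cdot))$, and taking the supremum over $u(\cdot)$ gives optimality. For $t\geqslant\hat\tau$ domination is automatic since $\hat s\equiv\bar s(z_1)$ attains the global maximum. In the approach phase, consider the case $s_0<\bar s(z_1)$ (the case $s_0>\bar s(z_1)$ is symmetric, exchanging $u_{\max}$ for $0$, and $s_0=\bar s(z_1)$ is immediate): here $\hat s$ rises through the region where $\phi(\cdot,z_1)$ is increasing, so it suffices to establish the comparison $s(t)\leqslant\hat s(t)$ while $\hat s(t)<\bar s(z_1)$, which combined with monotonicity of $\phi(\cdot,z_1)$ gives $\phi(s(t),z_1)\leqslant\phi(\hat s(t),z_1)$.

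The hard part will be precisely this comparison $s(t)\leqslant\hat s(t)$, because the $s$-dynamics are coupled to $z$ through $m(s,z)$ and $\mu(s,\cdot)$ is not assumed monotone in $s$ (e.g. Haldane). At a putative first contact time $t_c$ with $s(t_c)=\hat s(t_c)=:s_c$, the sign of $\dot{\hat s}(t_c)-\dot s(t_c)=\big[(u_{\max}-u(t_c))+(m(s_c,z(t_c))-m(s_c,\hat z(t_c)))\big](s_{in}-s_c)$ reduces, since $u(t_c)\leqslant u_{\max}$, to comparing $m(s_c,\hat z(t_c))$ and $m(s_c,z(t_c))$, i.e. to an auxiliary comparison of $\hat z(t_c)$ and $z(t_c)$; here I would use that $z\mapsto m(s,z)$ is non decreasing (Assumption \ref{assum:growthfct}) together with the invariant relation $(s_{in}-s)(z-1)=(s_{in}-s_0)(z_0-1)\exp\!\big(-\int_{t_0}^t u\big)$ derived from \eqref{eq:chemosS}-\eqref{eq:chemosX}, which controls $z$ via the integrated dilution and is extremal under $\psi_{\bar s(z_1)}$, to close a \emph{simultaneous} monotone comparison of the pair $(s,z)$ along the approach phase. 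An alternative route is to cast the reduced single-state problem into the sufficient-condition framework for most rapid approach paths of Hartl and Feichtinger \cite{hartl1987new}, or to argue by the Pontryagin Maximum Principle that every extremal must coincide with $\psi_{\bar s(z_1)}$ and then invoke existence of an optimizer from the compactness of the admissible trajectories established in Proposition \ref{prop:limVdelta}. I expect this coupling-induced comparison to be the only genuinely delicate point, the reachability, holding, and single-peak ingredients being routine given the earlier results.
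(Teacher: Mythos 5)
Your overall architecture matches the paper's: reachability of $\bar s(z_1)$ and holding there (Lemma \ref{lem:sStarReachable}), unimodality of $s\mapsto\phi(s,z_1)$ from Assumption \ref{assum:maxphi-z1}, and a pointwise domination $\phi(\hat s(t),z_1)\geqslant\phi(s_{t_0,\xi,u}(t),z_1)$ deduced from a trajectory comparison during the approach phase. The problem is that you stop exactly at the step you yourself call ``the hard part'': the comparison $s_{t_0,\xi,u}(t)\leqslant\hat s(t)$ (resp.\ $\geqslant$) is never actually established. You list three candidate routes (a first-contact argument combined with the invariant $(s_{in}-s)(z-1)=(s_{in}-s_0)(z_0-1)e^{-\int u}$, the Hartl--Feichtinger sufficient conditions, or PMP plus compactness) without carrying any of them out, so as written this is an outline whose central step is missing. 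Moreover, the one route you sketch in any detail does not obviously close: at a first contact time $t_c$ in the case $s_0>\bar s(z_1)$ with $z_0<1$, your invariant gives $z_u(t_c)\geqslant \hat z(t_c)$ for the competing trajectory (since $e^{-\int u}\leqslant 1$ and $z_0-1<0$), so the monotonicity of $z\mapsto m(s,z)$ pushes the consumption term in the \emph{unfavourable} direction relative to the added dilution term $u(t_c)(s_{in}-s_c)$, and the sign of $\dot{\hat s}(t_c)-\dot s(t_c)$ is not determined by the ingredients you invoke. So this is a genuine gap, not an omitted routine verification.

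For the record, the paper's proof closes this step far more directly than any of your three routes: on the approach phase the feedback trajectory coincides with an extreme-control trajectory ($u\equiv 0$ when $s_0\geqslant\bar s(z_1)$, $u\equiv u_{\max}$ when $s_0<\bar s(z_1)$), and since for every state $(s,z)\in{\cal L}(\xi)$ and every $u\in[0,u_{\max}]$ the right-hand side of the $s$-equation in \eqref{eq:chemosz} is squeezed between its values at $u=0$ and at $u=u_{\max}$, the comparison theorem for scalar differential equations is invoked to get $s_{t_0,\xi,0}(t)\leqslant s_{t_0,\xi,u}(t)\leqslant s_{t_0,\xi,u_{\max}}(t)$ up to the hitting time of $\bar s(z_1)$; combined with the monotonicity of $\phi(\cdot,z_1)$ on either side of $\bar s(z_1)$ and the fact that $\phi(\bar s(z_1),z_1)$ is the global maximum, this yields the pointwise domination and hence optimality. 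To repair your write-up, replace the first-contact/invariant machinery by this comparison against the extreme controls; the remainder of your argument then goes through unchanged.
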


\begin{proof}
We start with the case $s_0 \geqslant \bar{s}(z_1)$.
With the control $u=0$, the solution of \eqref{eq:chemosz} is such that $s_{t_0,\xi,0}(\cdot)$ is monotonic and non increasing.
Therefore there exists a time $t_{min}$, possibly larger than $T$, such that $s_{t_0,\xi,0}(t_{min})=\bar{s}(z_1)$ and then the solution with the feedback \eqref{def:MRAPfeedback} is, with $ t_* = \min(t_{min},T)$
\[
s_{t_0,\xi,\psi_{\bar s(z_1)}}(t)=
\begin{cases}
 s_{t_0,\xi,0}(t) \qquad & \mbox{if }  t_0 \leqslant t < t_*, \\
 \bar{s}(z_1) & \mbox{if }  t_* \leqslant t \leqslant T.
\end{cases}
\]
Next, for all $u \in [0,u_{max}]$ and for all $(s,z) \in {\cal L}(\xi) $,
\[
 -\mu(s,(s_{in}-s)z)(s_{in}-s)z \leqslant (s_{in}-s)u - \mu(s,(s_{in}-s)z)(s_{in}-s)z.
\]
By the theorem of comparison of solutions of scalar differential equations, this implies that $ s_{t_0,\xi,0}(t) \leqslant s_{t_0,\xi,u}(t)$, up to time $t_*$, for all controls $u(\cdot) \in {\cal U}(t_0,T)$.
Since $s \mapsto \phi(s,z_1)$ is decreasing on $[\bar{s}(z_1),s_{in})$, we have
\[
 \phi( s_{t_0,\xi,0}(t),z_1 ) \geqslant \phi( s_{t_0,\xi,u}(t),z_1 ).
\]
Finally, as $s \mapsto \phi(s,z_1)$ reaches its maximum at $\bar{s}(z_1)$ we get
\begin{align*}
 J_{z_1}(t_0,\xi,\psi_{\bar s(z_1)}) &= \int_{t_0}^{t_*} \phi( s_{t_0,\xi,0}(t),z_1 ) dt + \int_{t_*}^{T} \phi( \bar s(z_1),z_1 ) dt \\
 & \geqslant \int_{t_0}^{T} \phi( s_{t_0,\xi,u}(t),z_1 ) dt \\
 & = J_{z_1}(t_0,\xi,u).
\end{align*}
We now consider $ s_0 < \bar{s}$.
From Assumption \ref{assum:umax}, the feedback is admissible and we have
\[
 u_{\max} \geqslant \mu(s,(s_{in}-s)z)z \qquad \text{for all } (s,z) \in {\cal L}(\xi)
\]
Thus, with the control $u=u_{\max}$, the solution of \eqref{eq:chemosz} is such that $s_{t_0,\xi,u_{\max}}(\cdot)$ is monotone and non decreasing.
Therefore, there exists a time $t_{\max}$, possibly larger than $T$, such that $s_{t_0,\xi,u_{\max}}(t_{\max})=\bar{s}(z_1)$ and then the solution with the feedback \eqref{def:MRAPfeedback} is, with $ t_* = \min(t_{\max},T)$
\[
s_{t_0,\xi,\psi_{\bar s(z_1)}}(t)=
\begin{cases}
 s_{t_0,\xi,u_{max}}(t) \qquad & \mbox{if }  t_0 \leqslant t < t_*, \\
 \bar{s}(z_1) & \mbox{if }  t_* \leqslant t \leqslant T.
\end{cases}
\]
Next, for all $u \in [0,u_{max}]$ and for all $(s,z) \in {\cal L}(\xi) $
\[
 (s_{in}-s)(u_{max} - \mu(s,(s_{in}-s)z)z) \geqslant (s_{in}-s)(u - \mu(s,(s_{in}-s)z)z)
\]
and this implies that $ s_{t_0,\xi,u_{\max}}(t) \geqslant s_{t_0,\xi,u}(t)$, up to time $t_*$, for all controls $u(\cdot) \in {\cal U}(t_0,T)$.
Since $s \mapsto \phi(s,z_1)$ is increasing on $(0,\bar{s}(z_1)]$, we have
\[
 \phi( s_{t_0,\xi,u_{max}}(t),z_1 ) \geqslant \phi( s_{t_0,\xi,u}(t),z_1 ).
\]
Finally, since $s \mapsto \phi(s,z_1)$ reaches its maximum at $\bar{s}(z_1)$, we get
\begin{align*}
 J_{z_1}(t_0,\xi,\psi_{\bar s(z_1)}) &= \int_{t_0}^{t_*} \phi( s_{t_0,\xi,u_{max}}(t),z_1 ) dt + \int_{t_*}^{T} \phi( \bar s(z_1),z_1 ) dt \\
 & \geqslant \int_{t_0}^{T} \phi( s_{t_0,\xi,u}(t),z_1 ) dt \\
 & = J_{z_1}(t_0,\xi,u).
\end{align*}

\end{proof}

%%%%%%%%%%%%%%%%%%%%%%%%%%%%%%%%%%%%%%%%%%%%%%%%%%%%%%%%%%%%%%%%%%%%%%%%%%%%%%%%%%%%%%%%%%%%%%%

\section{Application to Particular Growth Functions}
\label{secappli}

The controls that we have considered up to now are all most rapid approach feedbacks to $\bar s(z_1)$, with $z_1 \in [\min(z_0,1),\max(z_0,1)]$, and this leads to the question of which is best in terms of biogas production.
It turns out that it depends on the initial conditions and the horizon considered.

Indeed, we know that for an infinite horizon, the feedback $\psi_{\bar s(z_1)}$ with $z_1=1$ is optimal and we can then expect that when the horizon is large, the best of the considered feedbacks would be for $z_1$ close to 1.
On the other hand, when the horizon is small, the feedback $\psi_{\bar s(z_0)}$ would seem to be the best option since this strategy consists in remaining close to the maximum of the biogas flow rate corresponding to the initial condition, whereas another feedback could drive the system away, towards another maximizing state but that can not be reached in time.

In this section, we apply our main results to the most common growth functions and explore with numerical simulations the question of determining the best feedback $\psi_{\bar s(z_1)}$ for a given initial condition and final time.
In particular, we will work with the Monod function
\begin{equation} \label{monod}
 \mu_M(s)=\frac{\mu_{max}s}{K_s+s}
\end{equation}
the Haldane function
\begin{equation} \label{haldane}
 \mu_H(s) = \frac{\bar \mu s }{K_s + s + \frac{s^2}{K_{i}}}
\end{equation}
and the Contois function
\begin{equation} \label{contois}
 \mu_C(s,x)=\frac{\mu_{max}s}{K_s x+s}
\end{equation}
where $\mu_{max}$, $\bar \mu$, $ K_s $ and $K_i $ are positive numbers.
We shall see later that these functions satisfy our assumptions (Lemma \ref{lem:muAssump}).

First, note that the Monod and Haldane functions only depend on the substrate, so that in this case, the maximizers $\bar s(z_1)$, defined in \eqref{eq:sbar-z1}, are all equal to $\bar s(1) = \bar s$, for all $z_1 \in [\min(z_0,1) , \max(z_0,1) ]$.
We illustrate the associated feedback $\psi_{\bar s}$ for a Haldane function with a graph of the state space trajectories in Figure \ref{fig:sbar(1)}.
The case of a Monod function leads to a similar dynamical behavior and the only major difference is the value of $\bar s$.

From now on we will only consider the Contois growth function, for which we plot the trajectories in state space obtained with the feedback $\psi_{\bar s(z_0)}$ in Figure \ref{fig:sbar(z0)}.

\begin{figure}
 \begin{center}
 \includegraphics[width=0.8\textwidth]{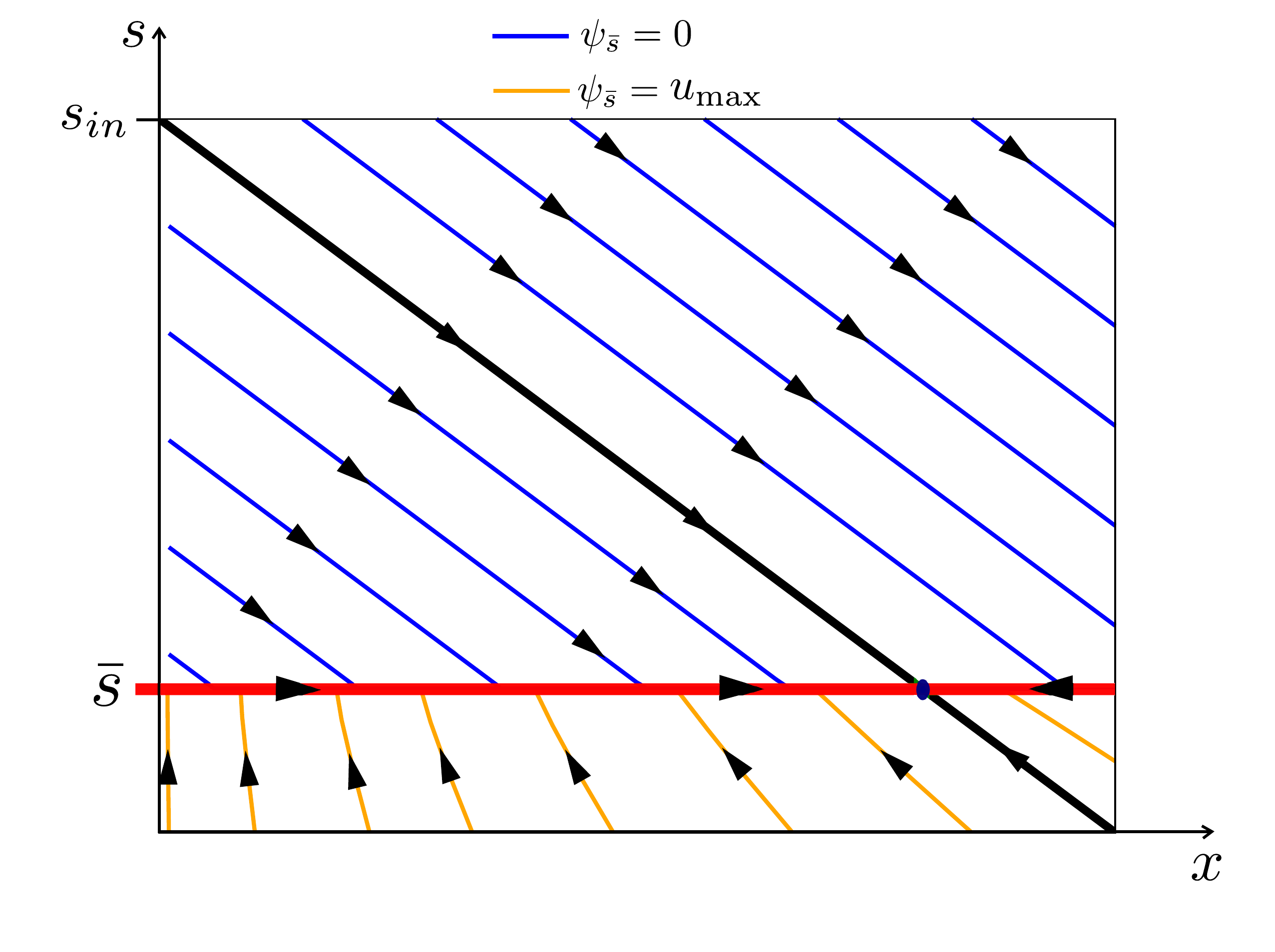}
  \caption{State space trajectories with feedback $\psi_{\bar s}$. The black line represents the invariant set $\{(x,s) :  x+s = s_{in} \}$.
  Haldane growth function ($\bar \mu=0.74, K_s=9.28, K_i=256$) with $s_{in}=100$, $u_{\max}=3$.}
  \label{fig:sbar(1)}
 \end{center}
\end{figure}

\begin{figure}
 \begin{center}
 \includegraphics[width=0.8\textwidth]{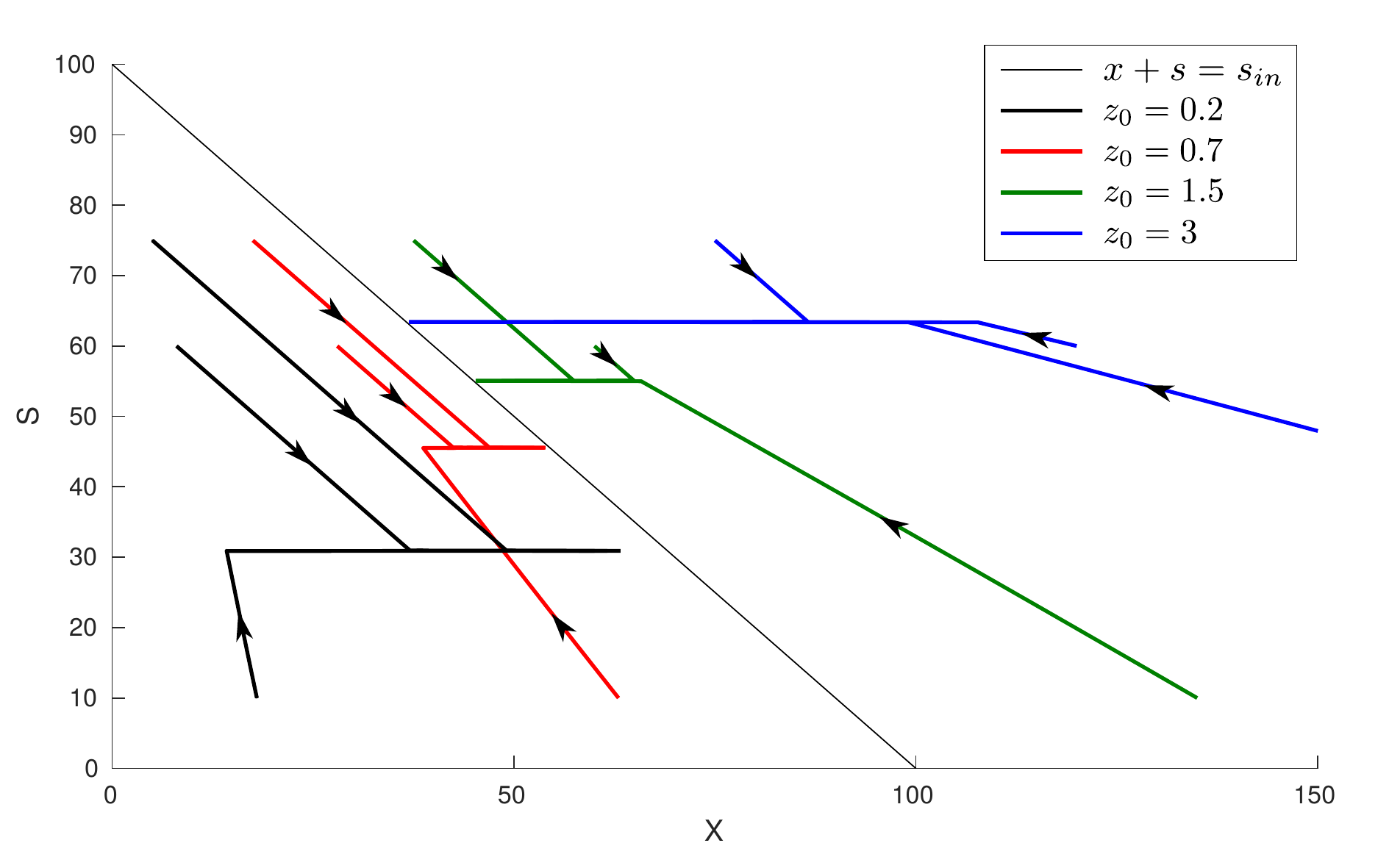}
  \caption{State space trajectories with feedback $\psi_{\bar s(z_0)}$ for $z_0 \in \{ 0.2 , 0.7 , 1.5 , 3\}$ and $s_0 \in \{ 10 , 60 , 75\}$.
   The color and type of line indicates the value of $z_0$.
  Contois growth function ($\mu_{max}=0.74, K_s=1$) with $s_{in}=100$, $u_{\max}=1.5$.}
  \label{fig:sbar(z0)}
 \end{center}
\end{figure}

To determine which of the feedbacks $\psi_{\bar s(z_1)}$ is the best, we now compute the associated reward for a range of values of $z_1 \in [\min(z_0,1) , \max(z_0,1) ]$ and of final times for a given initial condition. In order to easily identify the maximum of $J(\xi,\psi_{\bar s(z_1)}(\cdot))$ with respect to $z_1$, we normalize the average reward \eqref{eq:avgReward} by computing
\[
 J_N(T,z_1) = \frac{ J^T(\xi,\psi_{\bar s(z_1)}(\cdot)) - \min_{y} J^T(\xi,\psi_{\bar s(y)}(\cdot)) }{ \max_{y} J^T(\xi,\psi_{\bar s(y)}(\cdot)) - \min_{y} J^T(\xi,\psi_{\bar s(y)}(\cdot)) }
\]
where the minimum and maximum are taken for $y\in [\min(z_0,1) , \max(z_0,1) ]$.
Hence, for each final time $T$, the maximum reward is achieved for $z_1$ such that $J_{N}(T,z_1) = 1$ and the minimum when $J_{N}(T,z_1) = 0$.

Figure \ref{fig:normalReward-z0<1} shows a case when $z_0 <1$ and  Figure \ref{fig:normalReward-z0>1} is an example of $z_0 >1$.
We can see clearly that for small final times, the maximum is attained for a value of $z_1$ close to $z_0$ and that for $z_1=1$ the reward is the smallest.
However, as the final time increases, the value of $z_1$ for which the reward is maximum approaches 1, and with the feedback $\psi_{\bar s(z_0)}$ the reward is the smallest.
In particular, we can see that the best of the feedbacks $\psi_{\bar s(z_1)}$ depends on the final time.

\begin{figure}
 \begin{center}
 \includegraphics[width=0.8\textwidth]{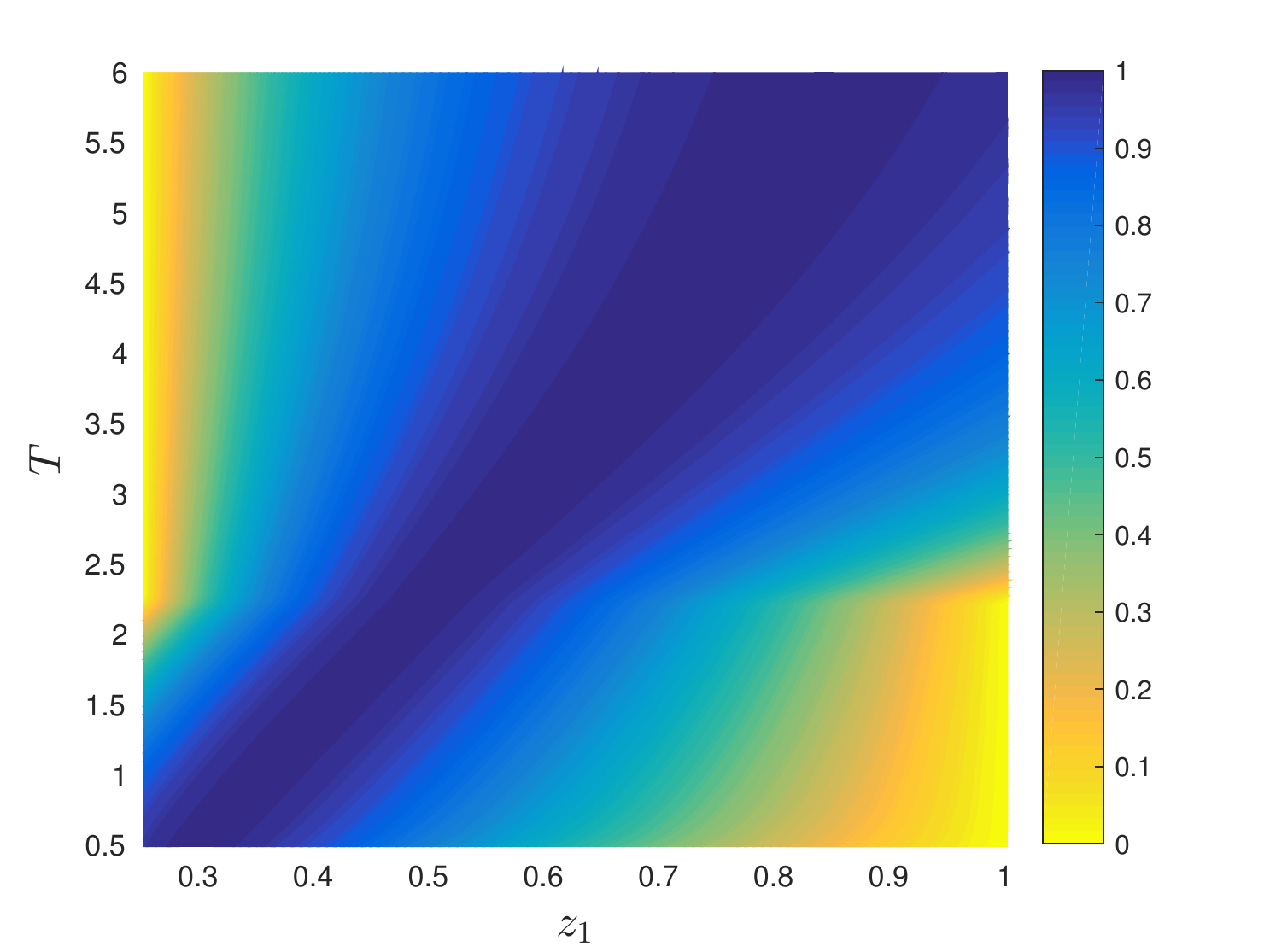}
  \caption{Normalized average reward   $J_N(T,z_1)$ as a function of $z_1 \in [z_0,1]$ and $T\in [0.5 , 6]$ for the initial condition $(x_0,s_0) = (20, 20)$.
  Contois growth function ($\mu_{max}=0.74, K_s=1$) with $s_{in}=100$, $u_{\max}=1.5$.}
  \label{fig:normalReward-z0<1}
 \end{center}
\end{figure}

\begin{figure}
 \begin{center}
 \includegraphics[width=0.8\textwidth]{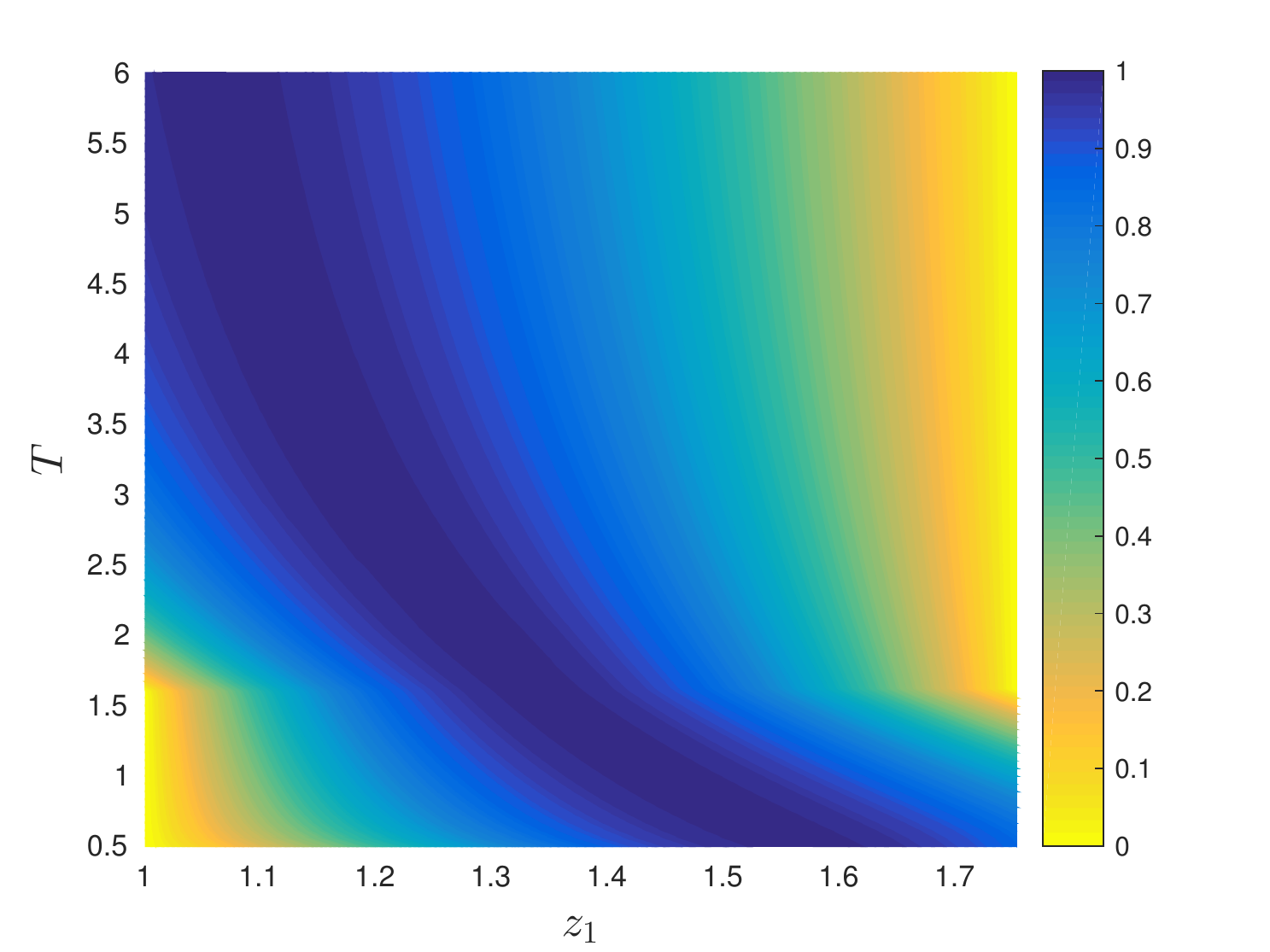}
  \caption{Normalized average reward  $J_N(T,z_1)$ as a function of $z_1 \in [1,z_0]$ and $T\in [0.5 , 6]$ for the initial condition $(x_0,s_0) = (70, 60)$.
  Contois growth function ($\mu_{max}=0.74, K_s=1$) with $s_{in}=100$, $u_{\max}=1.5$.}
  \label{fig:normalReward-z0>1}
 \end{center}
\end{figure}

This leads us to consider a new feedback that keeps the system in the set of maximizers
\begin{equation}
\overline {\cal S} = \big\{ (s,z) \in {\cal D} : s = \bar s(z) \big\}.
\end{equation}
We therefore introduce the following most rapid approach feedback to $\overline {\cal S}$
\begin{equation} \label{def:MRAPfeedback-s(z)}
\psi_{\overline {\cal S}}(s,z)=
\left|\begin{array}{ll}
  0 & \mbox{if } s>\bar s(z),\\
  \bar u(s,z) \quad &  \mbox{if } s=\bar s(z),\\
  u_{\max} & \mbox{if } s<\bar s(z),
\end{array}\right.
\end{equation}
where $\bar u(s,z)$ is the feedback that keeps the system in the set $\overline {\cal S}$, that we compute by differentiating with respect to time the equation $ s(t) = \bar s(z(t))$.

\begin{figure}
 \begin{center}
 \includegraphics[width=0.8\textwidth]{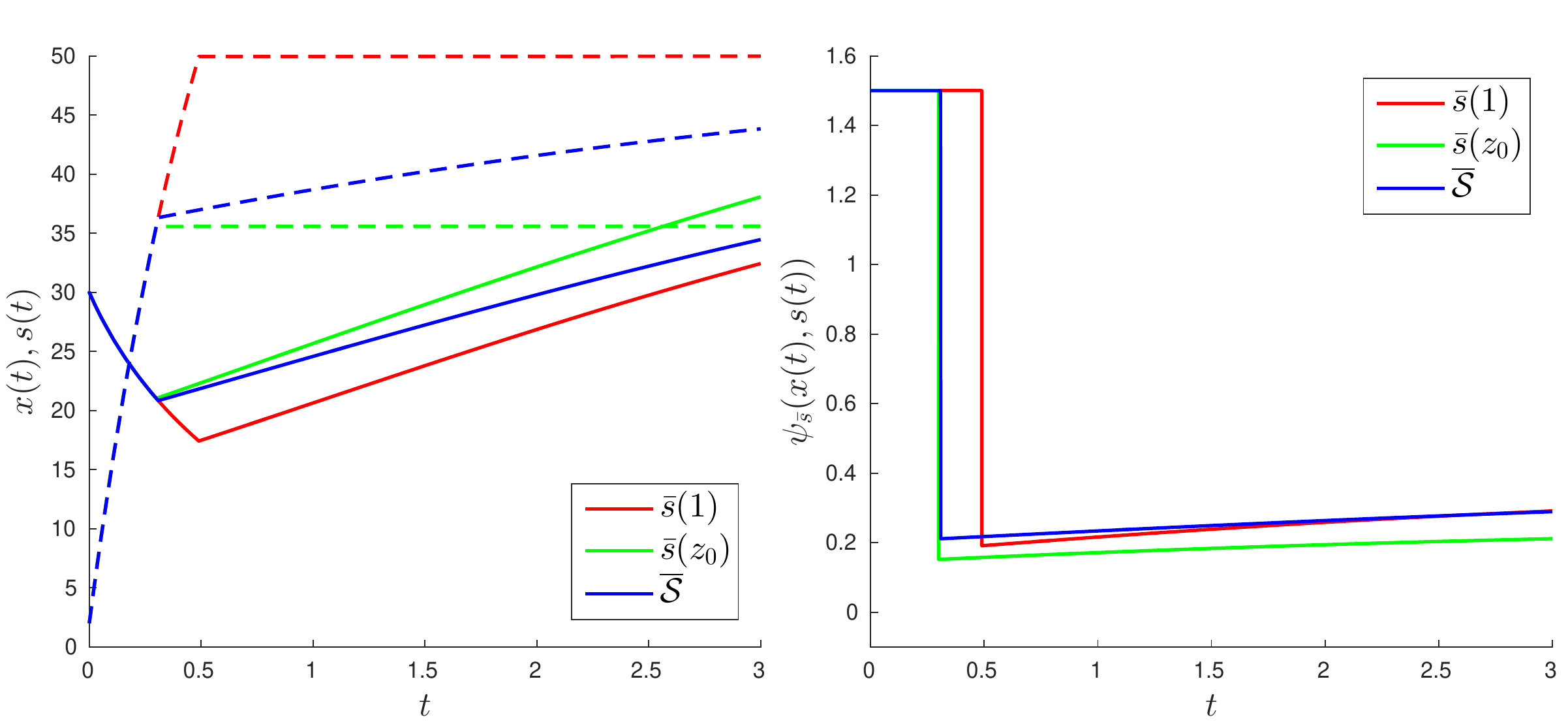}
  \caption{ On the left, $t\mapsto x(t)$ (solid lines) and $t \mapsto s(t)$ (dashed lines) with feedbacks $\psi \in \{ \psi_{\bar s(z_0)} ,  \psi_{\bar s(1)} , \psi_{\overline {\cal S}} \}$ and on the right, the corresponding open loop controls.
  Contois growth function ($\mu_{max}=0.74, K_s=1$) with $s_{in}=100$, $u_{\max}=1.5$ and initial condition $(x_0,s_0)=(30,2)$.}
  \label{fig:t->traj}
 \end{center}
\end{figure}

We first illustrate this feedback in Figure \ref{fig:t->traj} where we show the states as functions of time and the open loop realizations of the feedbacks $\psi_{\bar s(z_0)}$,  $\psi_{\bar s(1)}$ and $\psi_{\overline {\cal S}}$.
Next, in Figure \ref{fig:avg_cost_compare-T5-x020-s020-sbar(z_i)-gray} we compare the reward of the feedback $\psi_{\overline {\cal S}}$ to the others and we can notice that the reward associated with the feedback $\psi_{\overline {\cal S}}$ is always one of the best, although for any given final time it is possible to do better with a feedback $\psi_{\bar s(z_1)}$ for the right $z_1$.

Note also that the feedback $\psi_{\overline {\cal S}}$ will drive the system asymptotically towards the state $(s,z) = (\bar s ,1)$ so that it is also optimal for the infinite horizon problems \eqref{Vinfty-liminf}, \eqref{Vinfty-limsup} and \eqref{def:V0}.

\begin{figure}
 \begin{center}
 \includegraphics[width=0.8\textwidth]{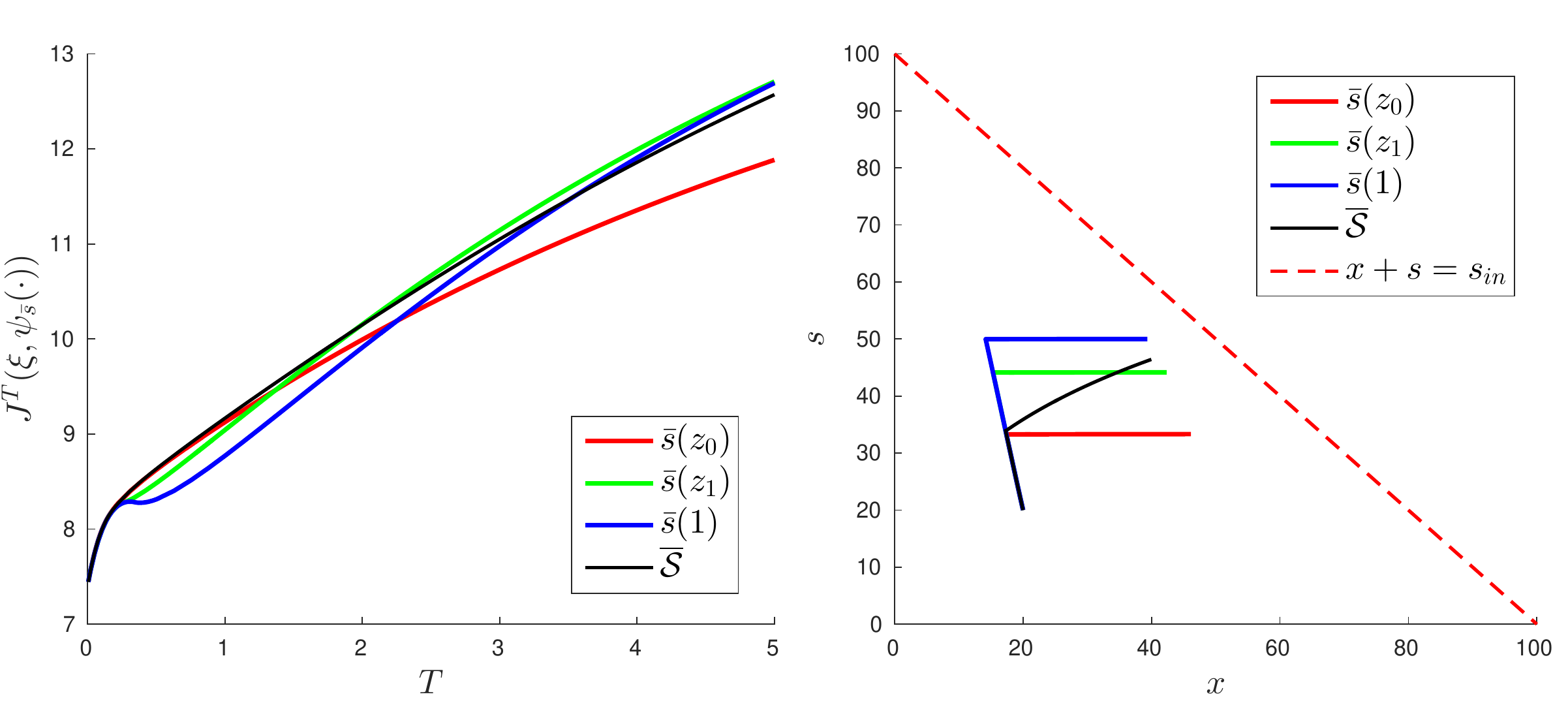}
  \caption{On the left, average reward as function of final time $T\mapsto J^T(\xi, \psi (\cdot) ) $ with feedback $\psi \in \{ \psi_{\bar s(z_0)} , \psi_{\bar s(z_1)} ,  \psi_{\bar s(1)} , \psi_{\overline {\cal S}} \}$ with $z_0 = 0.25$ and $z_1 = 0.625$.
  On the right, the corresponding state space trajectories.
  Contois growth function ($\mu_{max}=0.74, K_s=1$) with $s_{in}=100$, $u_{\max}=1.5$ and initial condition $(x_0,s_0)=(20,20)$.}
  \label{fig:avg_cost_compare-T5-x020-s020-sbar(z_i)-gray}
 \end{center}
\end{figure}

\begin{figure}
 \begin{center}
 \includegraphics[width=0.8\textwidth]{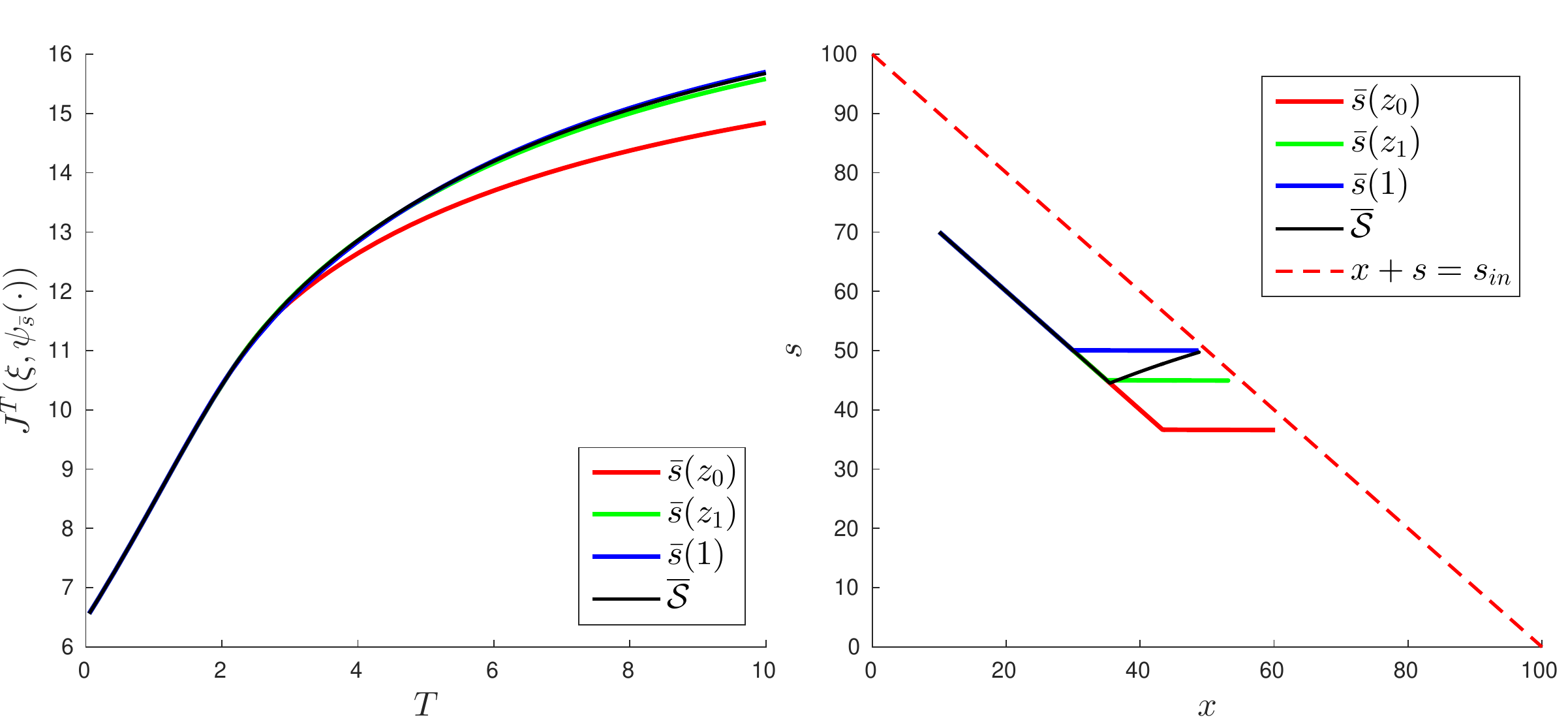}
  \caption{On the left, average reward as function of final time $T\mapsto J^T(\xi, \psi (\cdot) ) $ with feedback $\psi \in \{ \psi_{\bar s(z_0)} , \psi_{\bar s(z_1)} ,  \psi_{\bar s(1)} , \psi_{\overline {\cal S}} \}$ with $z_0 = 1/3$ and $z_1 = 2/3$.
  On the right, the corresponding state space trajectories.
  Contois growth function ($\mu_{max}=0.74, K_s=1$) with $s_{in}=100$, $u_{\max}=1.5$ and initial condition $(x_0,s_0)=(10,70)$.}
  \label{fig:avg_cost_compare-T5-x020-s020}
 \end{center}
\end{figure}

In Figure \ref{fig:cost_compare-Tf1-2-4-6-gray}, we show the difference between the rewards of the feedbacks $\psi_{\bar s(1)}$ and $\psi_{\bar s(z_0)}$ as a function of the initial condition for various final times.

From this, we see that the feedback that is best changes, depending on the initial condition and the horizon considered.

The sub-optimality estimation \eqref{eq:subOptFrame} is affected similarly, as this bound depends on the initial condition and in particular, the distance to the set $\{ z=1 \}$ has a major impact on the sub-optimality of the considered feedbacks.
In addition, the growth function has an influence on our estimation, through $W_{z_1}(\cdot)$, and we illustrate this in Figure \ref{fig:auxCostCointoisHaldaneT2} by plotting this value function for the Haldane and the Contois growth function.
Observe that, for the Contois growth function, $W_{z_1}(\cdot)$ varies significantly with the initial biomass and thus the sub-optimality bound as well.
This can be attributed to the dependence of the Contois growth function on biomass concentration and this effect is not seen with the Haldane growth function, which depends only on the substrate.

\begin{figure}
 \begin{center}
 \includegraphics[width=0.8\textwidth]{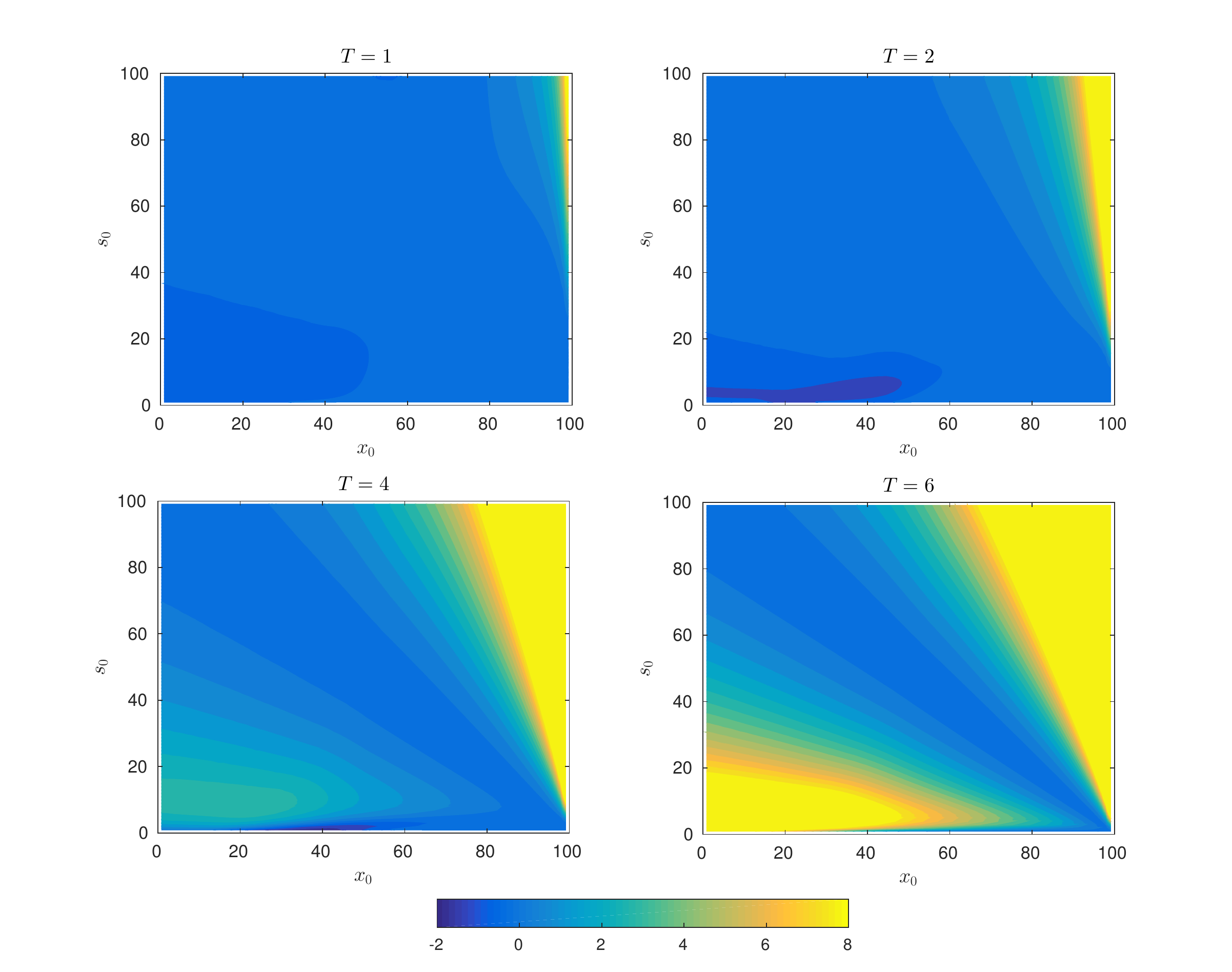}
  \caption{ Difference between rewards associated to the feedbacks $\psi_{\bar s (1)}$ and $\psi_{\bar s (z_0)}$  as functions of the initial condition  and for various final times : $ (x_0,s_0) \mapsto J(0,x_0,s_0, \psi_{\bar s (1)}(\cdot) ) - J(0,x_0,s_0, \psi_{\bar s (z_0)}(\cdot) ) $.
  Contois growth function ($\mu_{max}=0.74, K_s=1$) with $s_{in}=100$, $u_{\max}=1.5$.}
  \label{fig:cost_compare-Tf1-2-4-6-gray}
 \end{center}
\end{figure}

\begin{figure}
 \begin{center}
 \includegraphics[width=\textwidth]{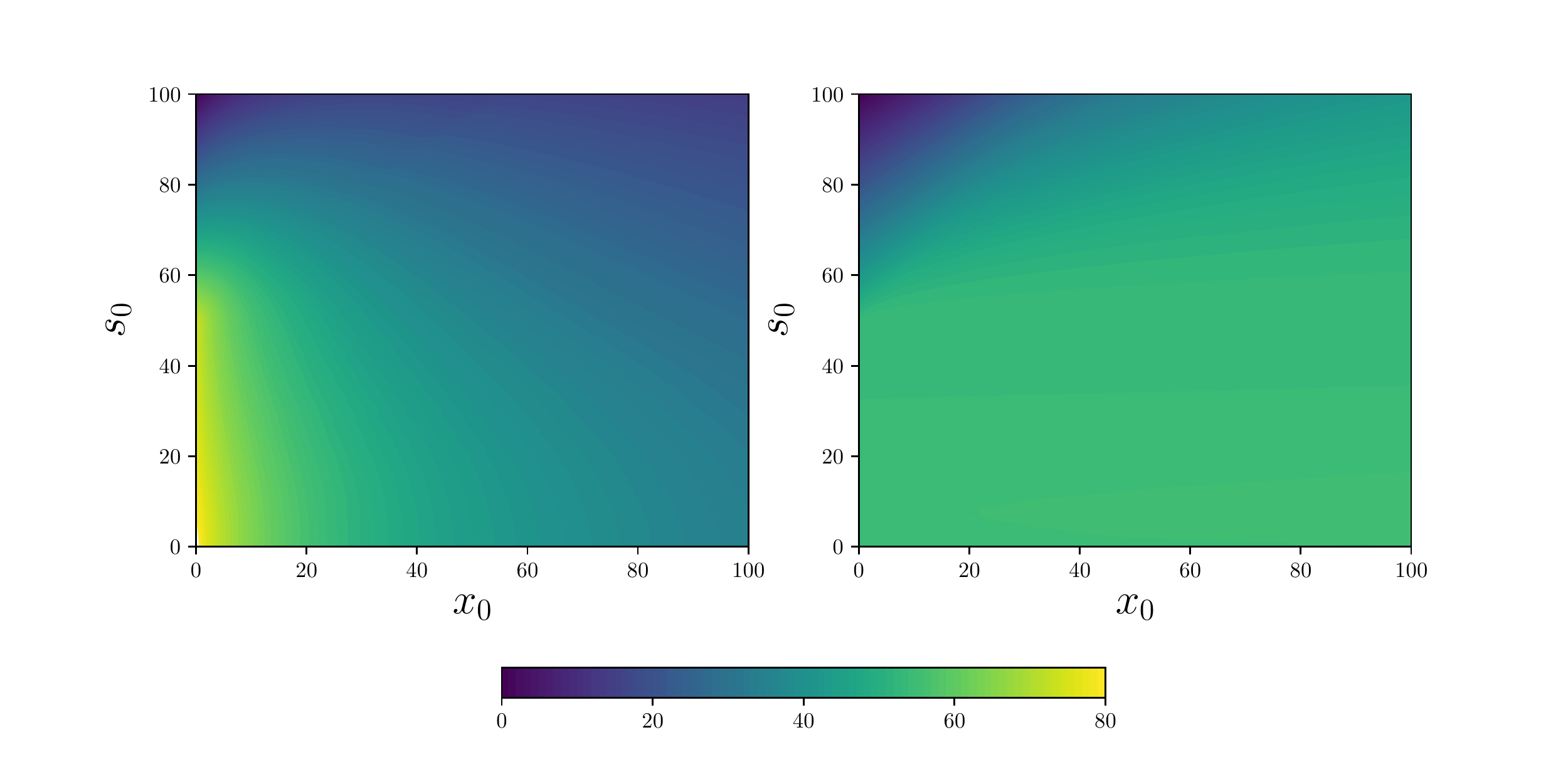}
  \caption{ Auxiliary value function $(x_0,s_0) \mapsto W_{z_1}(0,x_0,s_0)$ with $z_1 = 1$. On the left, Contois growth function ($\mu_{max}=0.74, K_s=1$, $u_{\max}=1.5$) and on the right, Haldane growth function ($\bar \mu=0.74, K_s=9.28, K_i=256$, $u_{\max}=3$). In both cases, $s_{in} = 100$ and $T=2$.}
  \label{fig:auxCostCointoisHaldaneT2}
 \end{center}
\end{figure}

We finish this section with a Lemma that shows that the considered growth functions satisfy our assumptions.

\begin{lemma} \label{lem:muAssump}
For all positive $\mu_{max}$, $\bar \mu$, $ K_s $ and $K_i $ the Monod, Haldane and Contois growth functions satisfy Assumptions \ref{assum:growthfct} and \ref{assum:maxphi-z1}.
\end{lemma}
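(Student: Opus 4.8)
The plan is to verify each property required by Assumptions \ref{assum:growthfct} and \ref{assum:maxphi-z1} for the three specific growth functions. I would organize the proof function by function, or better, property by property, since several verifications are routine for all three.

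First, for Assumption \ref{assum:growthfct}, I would check Lipschitz continuity, the boundary behavior, and the two monotonicity conditions. The boundary conditions $\mu(0,x)=0$ and $\mu(s,x)>0$ for $s>0$ are immediate by inspection of \eqref{monod}, \eqref{haldane} and \eqref{contois}, since each is a ratio with numerator proportional to $s$. Lipschitz continuity on $\mathbb{R}_+\times\mathbb{R}_+$ follows because each function is $C^1$ with bounded partial derivatives on the relevant domain (for Monod and Haldane this is clear; for Contois one notes the denominator $K_s x + s$ stays bounded away from $0$ away from the origin and handles the corner at the origin by the explicit bound). For the Monod and Haldane functions, which depend only on $s$, the monotonicity in $x$ is trivial: $x\mapsto\mu(s,x)$ is constant (hence non-increasing) and $x\mapsto\mu(s,x)x$ is linear in $x$ with non-negative slope (hence non-decreasing). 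The only genuinely density-dependent case is Contois. There I would compute $\partial_x\mu_C(s,x) = -\mu_{max}s K_s/(K_s x + s)^2 \leqslant 0$, giving the first monotonicity property, and observe that $\mu_C(s,x)\,x = \mu_{max}sx/(K_s x+s)$ is increasing in $x$ since $x\mapsto x/(K_s x+s)$ has positive derivative $s/(K_s x+s)^2$.

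For Assumption \ref{assum:maxphi-z1}, I need that $s\mapsto\phi(s,z_1)$ has a \emph{unique} maximum on $(0,s_{in})$, where $\phi(s,z_1)=\mu(s,(s_{in}-s)z_1)(s_{in}-s)$. For Monod, $\phi(s,z_1)=\mu_{max}s(s_{in}-s)/(K_s+s)$ depends only on $s$; I would differentiate and show the numerator of $\phi'$ is a quadratic in $s$ with a single sign change on $(0,s_{in})$, yielding a unique interior maximizer. For Haldane, $\phi(s,z_1)=\bar\mu s(s_{in}-s)/(K_s+s+s^2/K_i)$, again a function of $s$ alone; the same strategy applies, reducing $\phi'=0$ to a polynomial equation whose relevant root is unique on $(0,s_{in})$. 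For Contois, $\phi(s,z_1)=\mu_{max}s(s_{in}-s)/\big(K_s(s_{in}-s)z_1+s\big)$, and I would again differentiate in $s$ and argue the critical-point equation has a unique admissible solution.

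The main obstacle I anticipate is the uniqueness claim in Assumption \ref{assum:maxphi-z1}, particularly for the Haldane and Contois cases. Unlike existence (which comes for free from continuity of $\phi(\cdot,z_1)$ on the compact closure together with the vanishing of $\phi$ at the endpoints $s=0$ and $s=s_{in}$), uniqueness requires controlling the sign of $\phi'(\cdot,z_1)$. The cleanest route is to show that $\log\phi$, or equivalently $\phi'(s,z_1)=0$, reduces to a polynomial whose relevant factor is strictly monotone or has a single root in $(0,s_{in})$; for Haldane this leads to a quadratic-type condition and for Contois to a condition where the $z_1$-dependence enters the coefficients but does not create additional admissible roots. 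I would carry out this sign analysis explicitly for each function, checking that the derivative is positive on $(0,\bar s(z_1))$ and negative on $(\bar s(z_1),s_{in})$, which simultaneously establishes uniqueness and the increasing/decreasing structure noted after Assumption \ref{assum:maxphi-z1}.
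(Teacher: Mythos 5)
Your plan is correct and its overall architecture matches the paper's: existence of the maximizer comes for free from continuity and the vanishing of $\phi(\cdot,z_1)$ at $s=0$ and $s=s_{in}$, and the real work is a sign analysis of $\partial_s\phi$. The tactics differ in two places, though. For Monod the paper does not count roots of the quadratic numerator of $\phi'(\cdot,1)$ as you propose; it shows $\mu_M$ is increasing and strictly concave and then that $\frac{d^2}{ds^2}\phi(s,1)=\mu_M''(s)(s_{in}-s)-2\mu_M'(s)<0$, so uniqueness follows from strict concavity. For Haldane the paper argues exactly as you do: the derivative is positive at $0$ and negative at $s_{in}$, hence has an odd number of zeros, while its numerator is a quadratic with at most two roots, hence exactly one. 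For Contois the paper sidesteps your direct computation by observing $\mu_C(s,x)=\mu_M(s/x)$, so that $\phi(s,z_1)=\mu_M\big(s/((s_{in}-s)z_1)\big)(s_{in}-s)$, and deduces strict concavity from the Monod case; your direct route (the denominator $K_s(s_{in}-s)z_1+s$ is affine in $s$, so $\phi(\cdot,z_1)$ is a quadratic over an affine function and the critical-point equation is again a quadratic with one admissible root) is if anything more airtight, since composing with an increasing, convex map does not in general preserve concavity and the paper's one-line justification is terse. You are also more complete on Assumption \ref{assum:growthfct}, whose items (boundary behaviour and the two monotonicity properties in $x$) the paper's proof does not verify at all.

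One caveat: your treatment of Lipschitz continuity of $\mu_C$ near the origin does not work as written. The Contois function has no limit at $(s,x)=(0,0)$ (it equals $\mu_{max}$ on $\{x=0,\ s>0\}$ and $0$ on $\{s=0,\ x>0\}$), so no ``explicit bound at the corner'' can make it Lipschitz on all of $\mathbb{R}_+\times\mathbb{R}_+$; one must either restrict to the region actually visited by trajectories issued from ${\cal D}$, where $x=(s_{in}-s)z$ stays away from that corner, or modify the definition there. The paper's proof silently ignores this point, so it is not a gap relative to the paper, but your parenthetical should not be presented as resolving it.
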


\begin{proof}
Notice that the function $\phi$ with the Monod or Haldane function does not depend on $z$.
Let us show that the function $\mu_M$ is increasing and strictly concave
\[
\mu_M'(s) = \frac{\mu_{max}K_{s}}{(K_{s}+s)^2} >0 , \quad
\mu_M''(s) = -2\frac{\mu_{max}K_{s}}{(K_{s}+s)^3} <0 .
\]
Now, since the function $\phi(\cdot,1)$ is non-negative on $[0,s_{in}]$ and vanishes at 0 and $s_{in}$ it admits a maximum on $(0,s_{in})$.
One has
\begin{align}
\frac{d}{ds} \phi(s,1) & =  \mu_M'(s)(s_{in}-s)-\mu_M(s),\\
\frac{d^2}{ds^2}\phi(s,1) & =  \mu_M''(s)(s_{in}-s)-2\mu_M'(s).
\end{align}
The function $\phi(\cdot,1)$ is thus strictly concave on $(0,s_{in})$, which
provides the uniqueness of its maximum.

For the Haldane function, we have
\[
 \frac{d}{ds}\phi(s,1)= \bar \mu \frac{s_{in}K_s -2K_s s-s^2 (1+\frac{s_{in}}{K_i})}{(K_s +s+\frac{s^2}{K_i})^2}
\]
such that $\frac{d}{ds}\phi(0,1)>0$ and $\frac{d}{ds}\phi(s_{in},1)<0$ and since $\frac{d}{ds}\phi(\cdot,1)$ is continuous it must have an odd number of zeroes in the interval $(0,s_{in})$.
But notice that the equation $\frac{d}{ds}\phi(s,1)=0$ admits at most 2 solutions and $\phi(0,1)=\phi(s_{in},1)=0$ and therefore $\phi(\cdot,1)$ has a unique maximum.

For the Contois function, notice that $\mu_C(s,x) = \mu_M(s/x)$ so that, for $z_1 \in [\min(z_0,1) , \max(z_0,1) ]$
\[
 \phi(s,z_1) = \mu_{M}\left(\frac{s}{(s_{in}-s)z_1}\right) (s_{in}-s)
\]
and since $s \mapsto \frac{s}{(s_{in}-s)z_1}$ is an increasing function, $\phi(\cdot,z_1)$ is also strictly concave.

\end{proof}

\section{Conclusions}

In this work, we have proposed a novel approach to obtain autonomous sub-optimal feedbacks for the open problem of maximizing biogas production in the chemostat model out of equilibrium.
These controllers generalize the ``most-rapid approach path'' feedback control that is known to be optimal when the initial condition belongs to a certain manifold.
Indeed, we obtain a family of feedback controls of similar structure, for which we are able to give bounds on the sub-optimality.
This last point merits to be underlined as it usually difficult to evaluate a priori the performances of sub-optimality without having to determine or compute the optimal solution.
This choice gives also flexibility for the practitioners to choose a controller depending on the time horizon or simply to pick one when the finite horizon is poorly known (as each controller guarantees a sub-optimality bound), or to adjust it when the horizon is changed.
For infinite horizon we show that each controller guarantees the same optimal averaged cost.

This methodology, based on a framing of the dynamics, could be investigated for a larger class of dynamics, such as the two-step model, and be the matter of future work.

\section*{Acknowledgments}
The first and second authors were supported by FONDECYT grant 1160567, and by Basal Program CMM-AFB 170001 from CONICYT-Chile.
The first author was supported by a doctoral fellowship CONICYT-PFCHA/Doctorado Nacional/2017-21170249.
The third author was supported by the LabEx NUMEV incorporated into the I-Site MUSE.

%\appendix

\section*{Appendix: A Particular Example}
%\setcounter{section}{1}

%\begin{example} \label{counterex}
 We construct here a control $u(\cdot)$ for which the average rewards \eqref{averagerewardinftyinf} and \eqref{averagerewardinftysup} do not coincide.
For this, let us consider an initial condition $\xi=(s_0,z_0)=(\varepsilon, 1)$, with $\varepsilon \in (0,s_{in})$ fixed.
The set $\{ (s,1) \in \R^2_+ : s \in [0,s_{in}] \}$ is clearly invariant for the dynamics \eqref{eq:chemosz} and therefore the chosen initial condition ensures that trajectories $(s_{\xi,u}(\cdot),z_{\xi,u}(\cdot))$ remains in this set.

% In order to simplify notations, we will consider here only a substrate dependent growth function although this example also works for substrate and biomass dependent growth functions.

Now consider the 2 following paths :
\begin{enumerate}
 \item[(A)] Starting at $\xi:=(\varepsilon, 1)$, use the control $u=u_{\max}$ to reach a prescribed level of substrate $ s^*\in (\varepsilon,s_{in})$ in finite time.
   Then, apply the control $u=0$ to return to $\xi$ in finite time, which is possible by Assumption \ref{assum:umax}.
   Denote this control by $u_*$, and let $t_*$ be the (finite) time necessary to follow this path and $I_*$ be the biogas produced by this path.
 \item[(B)] Starting at $\xi:=(\varepsilon, 1)$, use $u=\mu(\varepsilon,s_{in}-\varepsilon)$ to stay at $(s=\varepsilon,z=1)$ for any time interval.
\end{enumerate}

Then, define control $u(\cdot)$ as follows:
\begin{itemize}
   \item For $t \in [0,t_*]$, set $u(t)=\mu(\varepsilon,s_{in}-\varepsilon)$ so that the biogas production for this period is $I_{\varepsilon}:= t_* \phi(\varepsilon,1)$.
   \item For $t \in (2^{2k} t_* , 2^{2k+1} t_* ]$, with $k \in \mathbb{N}$, set $u=u_*$ in order to follow the path (A) repeatedly $2^{2k}$ times. For each of these intervals the biogas production is $ 2^{2k}  I_*$.
   \item For $t \in (2^{2k+1} t_* , 2^{2k+2} t_* ]$, with $k \in \mathbb{N}$, set $u=\mu(\varepsilon,s_{in}-\varepsilon)$. For each of these intervals the biogas production is $2^{2k+1}I_{\varepsilon}$.
\end{itemize}

Thus, when we apply control $u(\cdot)$ up to a time $2^{2N} t_* $, for a given $N \geqslant 1$, the average biogas production is computed as follows
 \begin{align*}
  K_N &= \frac{1}{2^{2N} t_* } \int_0^{2^{2N} t_* } \phi(s_{\xi,u}(t),1) \, dt \\
  & = \frac{1}{2^{2N} t_* } \left(I_{\varepsilon}+ \sum_{k=0}^{N-1} 2^{2k} I_* + \sum_{k=0}^{N-1} 2^{2k+1} I_{\varepsilon}  \right)\\
%   & = \frac{1}{ t_* }\left( I_* + 2 I_{\varepsilon} \right)  \sum_{k=0}^{N-1} 2^{2(k-N)} + \frac{I_{\varepsilon}}{2^{2N} t_* } \\
  & = \frac{ I_* + 2 I_{\varepsilon}}{ t_* }  \sum_{j=1}^{N} 2^{-2j} + \frac{I_{\varepsilon}}{2^{2N} t_* }
 \end{align*}
 which yields
 $$ K_N \, \longrightarrow \, K_{\infty} := \frac{I_* + 2I_{\varepsilon}}{3 t_* }  \, \mbox{ as } N \rightarrow +\infty.$$
We have used here the fact that the sum $s_N=\sum_{j=1}^{N} 2^{-2j}$ converges to $1/3$. Indeed, this follows from the identity
 \begin{align*}
  4 s_N &=  \sum_{j=1}^{N} 2^{2(-j+1)}  = \sum_{i=0}^{N-1} 2^{-2i} = 1 + s_N - 2^{-2N}.
 \end{align*}

 However, for the same  control $u(\cdot)$,  the average biogas production is, up to time $2^{2N+1} t_* $, computed as follows
 \begin{align*}
  L_N &= \frac{1}{2^{2N+1} t_* } \int_0^{2^{2N+1} t_* } \phi(s_{\xi,u}(t),1) \, dt \\
  & = \frac{1}{2^{2N+1} t_* } \Big( 2^{2N} t_*  K_N +  2^{2N} I_* \Big) \\
  & = \frac{1}{2 } \Big(  K_N +  \frac{I_*}{ t_* } \Big)
 \end{align*}
which yields
 $$ L_N \, \longrightarrow \, L_{\infty} := \frac{2I_* + I_{\varepsilon}}{3 t_* }  \, \mbox{ as } N \rightarrow +\infty.$$

Since $ s_* > \varepsilon$, it follows that $I_* > I_{\varepsilon} $, and consequently, $L_{\infty} > K_{\infty}$.
We thus obtain that
$$\overline J^{\infty}(\xi,u(\cdot))\geqslant L_{\infty} > K_{\infty} \geqslant \underline J^{\infty}(\xi,u(\cdot)). $$

%\end{example}

\bibliographystyle{spmpsci_unsrt}
\bibliography{singlereactionbiogas}

\end{document}